\numberwithin{equation}{section}
\newtheorem{theorem}{Theorem}
\newtheorem*{definition*}{Definition}
\newtheorem{corollary}{Corollary}
\newtheorem{lemma}{Lemma}
\newtheorem{proposition}{Proposition}
\newtheorem{remark}{Remark}
\newcommand{\PP}{\mathbb{P}}
\renewcommand{\P}{\mathbb{P}}
\newcommand{\RR}{\mathbb{R}}
\newcommand{\R}{\mathbb{R}}
\newcommand{\FF}{\mathbb{F}}
\newcommand{\F}{\mathbb{F}}
\newcommand{\G}{\mathcal{G}}
\newcommand{\X}{\mathcal{X}}
\newcommand{\W}{\mathbb{W}}
\newcommand{\A}{\mathcal{A}}
\renewcommand{\L}{\mathcal{L}}
\newcommand{\var}{\text{Var}}
\newcommand{\e}{\varepsilon}
\renewcommand{\d}{\delta}
\newcommand{\s}{\sigma}
\newcommand{\sumin}{\sum_{i=1}^n}
\newcommand{\Beta}{\mathop{\rm{Beta}}\nolimits}
\newcommand{\DP}{\mathop{\rm{DP}}\nolimits}
\def\generalprob#1{\ {\mathchoice{\raise-1.5pt\hbox{$\buildrel#1\over\ra$}}
{\raise-2pt\hbox{$\buildrel#1\over\ra$}}{}{}}\ }
\def\outeras{\generalprob{\small as*}}
\def\E{\mathord{\rm E}}
\def\Pr{\mathord{\rm P}}
\def\weak{\rightsquigarrow}
\def\ra{\rightarrow}
\mathchardef\given="626A
\begin{document}

\begin{frontmatter}
\title{On the Bernstein-von Mises theorem for the Dirichlet process}
\runtitle{BvM for the Dirichlet process}

\begin{aug}
\author{\fnms{Kolyan} \snm{Ray}\thanksref{t2}\ead[label=e1]{kolyan.ray@imperial.ac.uk}}
\and
\author{\fnms{Aad} \snm{van der Vaart}\thanksref{t1,t2}\ead[label=e2]{avdvaart@math.leidenuniv.nl}}

\thankstext{t1}{The research leading to these result was (partly) financed by the NWO Spinoza prize awarded to A.W. van der Vaart by the Netherlands Organisation for Scientific Research (NWO).}
\thankstext{t2}{The research leading to these results has received funding from the European Research Council under ERC Grant Agreement 320637.}

\runauthor{K. Ray and A.W. van der Vaart}

\affiliation{King's College London and Universiteit Leiden}

\address{Department of Mathematics\\
Imperial College London\\
United Kingdom\\
\printead{e1}}

\address{Mathematical Institute\\
Leiden University\\ 
Netherlands\\
\printead{e2}}

\end{aug}

\begin{abstract}
We establish that Laplace transforms of the posterior Dirichlet process converge to those of the limiting Brownian bridge process in a neighbourhood about zero, uniformly over Glivenko-Cantelli function classes. For real-valued random variables and functions of bounded variation, we strengthen this result to hold for all real numbers. This last result is proved via an explicit strong approximation coupling inequality.
\end{abstract}

\begin{keyword}[class=MSC]
\kwd[Primary ]{62G20}
\kwd[; secondary ]{62G15, 60F17}
\end{keyword}

\begin{keyword}
\kwd{Bernstein--von Mises}
\kwd{Dirichlet process}
\kwd{strong approximation}
\kwd{Bayesian nonparametrics}
\end{keyword}

\end{frontmatter}

\section{Results}

Let $\PP_n=n^{-1}\sum_{i=1}^n \delta_{Z_i}$ be the empirical distribution of an i.i.d.\ sample $Z_1,\ldots,Z_n$ from a distribution
$P_0$ on some measurable space $(\X,\A)$, and given $Z_1,\ldots, Z_n$ let $P_n$ be 
a draw from the Dirichlet process with base measure $\nu+ n\PP_n$. Thus $\nu$ is a finite measure on
the sample space and $P_n\given Z_1,\dots,Z_n \sim \DP(\nu + n\PP_n)$ for all $n$, which is the
posterior distribution obtained when equipping the distribution of the observations
$Z_1,Z_2,\ldots, Z_n$ with a Dirichlet process prior with base measure $\nu$.
The case $\nu=0$ is allowed; the process $P_n$ is then known as the Bayesian bootstrap. For full definitions and properties, see the review in Chapter 4 of \cite{vandervaartbook2017}.

The Dirichlet process is the standard ``nonparametric prior" on the set of probability distributions on a (Polish) sample space and was first made popular in Bayesian nonparametrics by Ferguson \cite{Ferguson1974} and has subsequently been used in numerous statistical applications. The purpose of this note is to prove the following result concerning the Bernstein-von Mises theorem for the Dirichlet process posterior.

\begin{theorem}\label{thm}
Suppose $\G$ is a $P_0$-Glivenko-Cantelli class of measurable functions $g: \X\to\RR$ with
 measurable envelope function $G$, such that $\nu e^{tG}\le Ce^{ct^2}$ for every $t>0$ and some $c,C>0$, 
and $P_0G^{2+\d}<\infty$ for some $\d>0$.
Then there exists a neighbourhood of $0$, such that for every $t$ in the neighbourhood, 
\begin{align}
\label{EqMain}
\sup_{g\in \G}\left| \E\bigl[e^{t\sqrt{n}(P_ng-\PP_ng)} \given Z_1,\ldots,Z_n\bigr] - e^{t^2 P_0(g-P_0g)^2/2} \right|  \outeras 0.
\end{align}
\end{theorem}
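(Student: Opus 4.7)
The plan starts from the Ferguson representation $P_n = G_n/G_n(\X)$, where, conditionally on $Z_1,\dots,Z_n$, $G_n$ is a gamma random measure with intensity $\alpha_n := \nu + n\PP_n$ and the total mass $G_n(\X)\sim \Gamma(|\nu|+n,1)$ is independent of the normalised process $P_n$. Setting $\tilde g = g - \PP_n g$ and $T_n := P_n g - \PP_n g = P_n\tilde g$ produces the crucial factorisation $\int\tilde g\,dG_n = G_n(\X)\,T_n$ with $G_n(\X) \perp T_n$ given the data. Rather than attack $\E[e^{t\sqrt n T_n}\given Z_1,\dots,Z_n]$ directly, I would first analyse the auxiliary statistic $W_n := n^{-1/2}\int\tilde g\,dG_n$, whose conditional mgf is furnished explicitly by the Laplace functional of the gamma process,
\begin{equation*}
\E\bigl[e^{tW_n}\given Z_1,\dots,Z_n\bigr] = \exp\Bigl(-\int\log(1 - t\tilde g/\sqrt n)\,d(\nu + n\PP_n)\Bigr),
\end{equation*}
valid for $t$ in a neighbourhood of $0$ and $n$ large. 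Positivity of $1-t\tilde g/\sqrt n$ on the support of $\alpha_n$ and the integrability of the logarithms are secured by $\nu e^{tG}\le Ce^{ct^2}$ and by $\max_{i\le n}G(Z_i) = o(\sqrt n)$ a.s., which is itself a consequence of $P_0 G^2<\infty$.

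\textbf{Expansion of the auxiliary mgf.} Writing $-\log(1-x) = x + x^2/2 + R(x)$ with $|R(x)|\le C|x|^{2+\d}$ on $|x|\le 1/2$, the identity $\sum_{i=1}^n (g(Z_i)-\PP_n g) = 0$ annihilates the empirical linear term, and the atomic part collapses to $(t^2/2)\PP_n(g-\PP_n g)^2$ plus a remainder bounded by $C|t|^{2+\d}n^{-\d/2}\PP_n|g-\PP_n g|^{2+\d}$. The $\nu$-integrals contribute $O(n^{-1/2})$ since $\int G^k\,d\nu<\infty$ for every $k$ under the sub-Gaussian envelope. Uniformly over $g\in\G$, the main term converges, $(t^2/2)\PP_n(g-\PP_n g)^2 \to (t^2/2)P_0(g-P_0 g)^2$, by combining the $P_0$-Glivenko--Cantelli property of $\G$ with its standard preservation under squaring for classes with an $L^2(P_0)$ envelope, while the remainder is uniformly $o(1)$ because $\PP_n G^{2+\d}\to P_0 G^{2+\d}<\infty$ by the strong law. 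Thus $\E[e^{tW_n}\given Z_1,\ldots,Z_n]\outeras \exp\bigl(t^2 P_0(g-P_0 g)^2/2\bigr)$ uniformly in $g\in\G$.

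\textbf{Passage from $W_n$ to $\sqrt n T_n$.} The identity $\sqrt n T_n - W_n = T_n(n-G_n(\X))/\sqrt n$, combined with $T_n\perp G_n(\X)$ and the explicit Dirichlet variance formula $\E[T_n^2\given Z_1,\ldots,Z_n] = O(\PP_n(g-\PP_n g)^2/n)$, gives $\E[(\sqrt n T_n - W_n)^2\given Z_1,\ldots,Z_n] = O(n^{-1})$ uniformly in $g\in\G$, since $\E[(n-G_n(\X))^2] = |\nu|+n+|\nu|^2$. Coupling this with $|e^a - e^b|\le |a-b|(e^a+e^b)$ and Cauchy--Schwarz bounds the mgf difference by $|t|\cdot O(n^{-1/2})\cdot\sqrt{\E[e^{2t\sqrt n T_n}+e^{2tW_n}\given Z_1,\ldots,Z_n]}$, reducing the theorem to a uniform-in-$g$ bound on $\E[e^{2t\sqrt n T_n}\given Z_1,\ldots,Z_n]$.

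\textbf{Main obstacle.} This last uniform mgf bound is the step I expect to be hardest. I would derive it from the companion Gamma-mixture identity
\begin{equation*}
\E\bigl[(1-sT_n)^{-(|\nu|+n)}\given Z_1,\ldots,Z_n\bigr] = \exp\Bigl(-\int\log(1 - s\tilde g)\,d\alpha_n\Bigr),
\end{equation*}
obtained from the same Laplace functional by conditioning on $T_n$ and invoking $G_n(\X)\sim\Gamma(|\nu|+n,1)$, in conjunction with the elementary inequality $e^u\le(1-u/n)^{-n}$ (for $u<n$). Taking $s=2t/\sqrt n$, on the high-probability event $\{|T_n|\le \sqrt n/(4|t|)\}$, of conditional probability $1-O(n^{-2})$ by Chebyshev applied to $T_n$, one has $e^{2t\sqrt n T_n}\le (3/2)^{|\nu|}(1-2tT_n/\sqrt n)^{-(|\nu|+n)}$, so the conditional expectation on this event is dominated by the explicit formula, which is uniformly bounded by the second paragraph; the complementary event contributes $o(1)$ by Cauchy--Schwarz against the same identity applied at a slightly larger value of $t$. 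The recurring challenge throughout is to keep every error estimate uniform in $g\in\G$, which forces systematic exploitation of both envelope hypotheses.
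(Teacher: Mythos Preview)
There is a genuine gap in the treatment of the prior part $\nu$. Your Laplace-functional identity
\[
\E\bigl[e^{tW_n}\bigm| Z_1,\dots,Z_n\bigr]=\exp\Bigl(-\int\log(1-t\tilde g/\sqrt n)\,d(\nu+n\PP_n)\Bigr)
\]
requires $t\tilde g(x)/\sqrt n<1$ for $\alpha_n$-almost every $x$; whenever this fails on a set of positive $\nu$-measure, both sides are $+\infty$ and the Taylor expansion is vacuous. The hypothesis $\nu e^{tG}\le Ce^{ct^2}$ does \emph{not} force $G$ to be $\nu$-essentially bounded---take $\nu$ standard Gaussian on $\RR$ and $G(x)=|x|$. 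For any $g\in\G$ that is $\nu$-essentially unbounded above (which the theorem permits), the set $\{\tilde g\ge\sqrt n/t\}$ has positive $\nu$-measure for every fixed $n$ and $t>0$, so your auxiliary mgf is identically $+\infty$; the claim that ``positivity of $1-t\tilde g/\sqrt n$ on the support of $\alpha_n$ \ldots\ is secured by $\nu e^{tG}\le Ce^{ct^2}$'' is simply false. The same obstruction invalidates the companion Gamma-mixture identity in your ``main obstacle'' paragraph: both sides are $+\infty$ whenever $s\tilde g\ge 1$ on a set of positive $\nu$-measure.

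For the Bayesian bootstrap ($\nu=0$) your route does work, and is arguably more direct than the paper's Lindeberg-plus-Orlicz argument: the explicit log-mgf expansion replaces the quantitative CLT, and since then $|T_n|\le 2\max_i G(Z_i)=o(\sqrt n)$ almost surely, the event $\{|T_n|\le\sqrt n/(4|t|)\}$ has conditional probability one for large $n$, so the companion-identity bound on $\E_Z e^{2t\sqrt n T_n}$ goes through without circularity. The paper circumvents the $\nu$-obstruction by a different decomposition, writing $P_n g=V_n\,Qg+(1-V_n)\bar P_n g$ with $V_n\sim\Beta(|\nu|,n)$ and $Q\sim\DP(\nu)$, so that the prior contribution enters only through the product $\sqrt n V_n\cdot QG$. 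The sub-Gaussian hypothesis is then used solely to bound $\E e^{t\sqrt n V_n QG}$ (Lemma~\ref{LemmaExponentialMomentsBeta}), which is finite precisely because the beta factor $\sqrt n V_n=O_P(n^{-1/2})$ damps the argument---no finiteness of the raw gamma-process mgf is ever required. To repair your argument you would need to split off $G_\nu$ and control $n^{-1/2}\int\tilde g\,dG_\nu$ separately, which essentially reproduces the paper's beta--Dirichlet decomposition.
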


Here we write $S_n\outeras0$ if there exist measurable random variables $\Delta_n$ with $|S_n|\le \Delta_n$
and $\Delta_n\ra0$ for $P_0^\infty$-almost every  sequence $Z_1,Z_2,\dots$. 
This is because the supremum in the theorem
is not necessarily measurable in $Z_1,Z_2,\ldots$, and, when it is not, convergence on a set of
non-measurability one may be less informative than it appears, as pointed out by a referee. 
(Cf.\ Chapter~1.9 in \cite{vandervaart1996} for further discussion.)

The function $t\mapsto e^{t^2\s^2/2}$ is the Laplace transform of the normal distribution with mean
0 and variance $\s^2$. The theorem thus says that the Laplace transform of the posterior
Dirichlet process centered at the empirical measure tends to the Laplace transform of a centered
normal distribution with variance $P_0(g-P_0g)^2$ in a neighbourhood of 0. This implies that the
posterior Dirichlet process tends in distribution to a normal distribution (see Section \ref{sec:laplace_conv}), which is a version of the Bernstein-von Mises theorem for
the Dirichlet process prior (a weak version, as the usual theorem gives the approximation in the
total variation distance; see Section~12.2 of \cite{vandervaartbook2017} for discussion). The
convergence of the Laplace transform is useful for handling for instance moments of the posterior
distribution.

The main contribution of the theorem is, however, to provide uniformity in a class
of functions $g$. This uniformity refers to the \emph{marginal} posterior distributions of the
process $\bigl(\sqrt{n}(P_ng-\PP_ng): g\in\G\bigr)$. The stronger sense of uniformity of
distributional convergence of this process as a random element in the set $\ell^\infty(\G)$ is known
to be true if $\G$ is a Donsker class, as shown in \cite{James2008} (also see \cite{lo1983,lo1986}). This
is a much stronger property than Glivenko-Cantelli as assumed here.

\begin{remark}
The assumption in Theorem~\ref{thm} that the class $\G$ possesses an envelope function
$G$ with sub-Gaussian tails under the base measure of the Dirichlet process prior $Q\sim \DP(\nu)$
ensures that the distribution of $QG$ possesses a finite moment generating function. If $G\in\G$, this 
is clearly necessary (see also Lemma~\ref{LemmaExponentialMomentsBeta}(iii) below).
It results from the fact that although vanishing in the limit, the prior
remains present in the posterior for every $n$. 
In the case of the Bayesian bootstrap, which formally corresponds to taking $\nu=0$,
this condition can be omitted.
\end{remark}

\begin{remark}
Theorem~\ref{thm} can be extended to the assertion \eqref{EqMain} for a sequence $\G_n$ of
classes of measurable functions. Inspection of the proofs below shows that if the total mass
of the base measures remains bounded, then it suffices that these classes satisfy
$$\sup_{g\in \G_n\cup \G_n^2}| \PP_ng-P_0g| \outeras 0,\qquad \sup_{g\in\G_n} P_0g^2=O(1),$$
and possess envelope functions $G_n$ such that 
$\max_{1\le i\le n} G_n(Z_i)= o(\sqrt n/\log n)$, almost surely, 
and $\E e^{tQG_n/\sqrt n}\le e^{ct}$ for   $Q\sim \DP(\nu)$ and every $0<t<\sqrt n$. 
The last condition  is implied by uniform sub-Gaussianity of the envelopes $G_n$,
but it is of course also satisfied if $\|G_n\|_\infty\lesssim \sqrt n$ for every $n$. 
(If the total mass of the base measures increases to infinity, then this last condition
must be replaced by a condition that ensures the assumption of Lemma~\ref{LemmaExponentialMomentsBeta}(iii).)
For convergence in probability in 
\eqref{EqMain}, it suffices that these conditions hold in probability.
If the classes $\G_n$ are separable, then uniformity over $\G_n^2$ is implied by uniformity over
$\G_n$, as shown by Lemma~11 of \cite{rayvdv2018}. 
\end{remark}

Major applications of studying posterior Laplace transforms of functionals as in \eqref{EqMain} include establishing semiparametric and nonparametric Bernstein-von Mises theorems \cite{castillo2014b,castillo2015,ray2017,rivoirard2012}, especially for inverse problems \cite{monard2017,nickl2017,nickl2019}, posterior contraction rates in the supremum norm \cite{castillo2014,castillo2020,nicklray2019} and convergence rates for Tikhonov-type penalised least squares estimators \cite{monard2017,nicklray2019}. Such proofs typically require uniformity over function classes as established in \eqref{EqMain} and use likelihood expansions based on local asymptotic normality (LAN) of the model. Because the Dirichlet process prior does not give probability one to a dominated set of measures, the resulting posterior distribution cannot be derived using Bayes’ formula; one cannot thus use the LAN approach of the aforementioned papers to prove \eqref{EqMain}. 

Our result is applicable when a Dirichlet process prior is assigned to some distributional component of the model, such as the covariate distribution in regression models with random design. For example, Theorem \ref{thm} has recently been applied to establish semiparametric Bernstein-von Mises results for estimating average treatment effects in causal inference problems \cite{rayszabo2019,rayvdv2018}. Indeed, results there suggest that for estimating functionals, using a Dirichlet process prior on the covariate distribution can yield better performance than other common priors choices, especially in high-dimensional covariate settings.

Bernstein-von Mises and Donsker-type results have also been obtained for generalizations of the Dirichlet process, such as P\'olya trees \cite{castillo2017} and the Pitman-Yor process \cite{James2008,franssen2021}, and similar style results to Theorem \ref{thm} can be expected to hold for such priors. While the main outline of our proof should be applicable, it relies on the explicit posterior representation \eqref{EqDP} of the Dirichlet process for several technical computations, which would have to be extended to the explicit posterior representations available for these other priors.

\subsection*{The case $\X = \R$}

The proof of Theorem \ref{thm} requires uniformly bounded exponential moments of the process $(\sqrt{n}(P_ng-\PP_ng):g\in \G)$, which only holds for small $|t|$ under the moment condition $P_0G^{2+\delta}<\infty$ of the theorem (see Lemmas \ref{LemmaExpMoments} and \ref{LemmaMoments}). When $\X = \R$, we can strengthen Theorem \ref{thm} to hold for all $t\in \R$ under significantly stronger conditions on $\G$.

We now assume $Z_1,Z_2,\dots$ are i.i.d. random variables taking values in $\X = \R$. Recall that the total variation of a function $f:\R \to \R$ on an interval $[a,b]$ is
$$V_a^b(f) = \sup_{\Pi \in {\cal P}_a^b} \sum_{i=1}^{n_{\Pi}} |f(x_i)-f(x_{i-1})|,$$
where ${\cal P}_a^b = \{ \Pi = (x_0,\dots,x_{n_P}): a=x_0\leq x_1 \leq \dots \leq x_{n_{\Pi}}, n_{\Pi} \in \mathbb{N}\}$ is the set of all partitions of $[a,b]$, and define $|f|_{BV} = \sup_{a,b} V_a^b(f)$. 

\begin{proposition}\label{BV_prop}
Suppose $\G$ is a class of right-continuous functions $g: \R\to\RR$ such that $\sup_{g\in \G} |g|_{BV} <\infty$.
Then for every $t\in \R$, for $P_0^\infty$-almost every  sequence $Z_1,Z_2,\ldots$,
\begin{align*}
\sup_{g\in \G}\left| \E\bigl[e^{t\sqrt{n}(P_ng-\PP_ng)} \given Z_1,\ldots,Z_n\bigr] - e^{t^2 P_0(g-P_0g)^2/2} \right| \ra 0.
\end{align*}
\end{proposition}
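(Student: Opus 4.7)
My strategy couples the posterior Dirichlet CDF process to a Brownian bridge in the supremum norm, and exploits the duality between bounded variation and $\ell^\infty$ via integration by parts. This bypasses the small-$t$ restriction of Theorem~\ref{thm}, which arose from controlling exponential moments over general $P_0$-Glivenko--Cantelli classes.

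Since $P_n g - \PP_n g$ is invariant under $g \mapsto g - c$, and each right-continuous BV function has a limit at $-\infty$, I replace each $g$ by $g - g(-\infty)$ so that $\|g\|_\infty \le |g|_{BV} \le M := \sup_{g \in \G}|g|_{BV}$. Integration by parts then gives
\begin{equation*}
\sqrt n(P_n g - \PP_n g) = -\int_{\R} Y_n(x)\, dg(x), \qquad Y_n(x) := \sqrt n \bigl(F_{P_n}(x) - F_{\PP_n}(x)\bigr),
\end{equation*}
and the uniform BV bound $|\sqrt n(P_n g - \PP_n g)| \le M \|Y_n\|_\infty$ reduces the problem to a sup-norm approximation of $Y_n$.

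The main technical step is to construct, on an enriched probability space, a standard Brownian bridge $B$ independent of the data such that, conditionally on $Z_1,\dots,Z_n$,
\begin{equation*}
\eta_n := \sup_{x \in \R} \bigl|Y_n(x) - B(F_{\PP_n}(x))\bigr| \longrightarrow 0 \quad P_0^\infty\text{-almost surely}.
\end{equation*}
I would use the Ferguson/Sethuraman representation $P_n = W_0 Q + \sum_{i=1}^n W_i \delta_{Z_i}$ with $(W_0,\dots,W_n) \sim \mathrm{Dir}(\nu(\R),1,\dots,1)$ and $Q \sim \DP(\nu)$ independent. Writing the weights via i.i.d.\ gamma variates, $F_{P_n}$ becomes a normalised partial sum of i.i.d.\ exponentials evaluated at the order statistics of $Z_{1},\dots,Z_n$, and a KMT-style strong embedding gives $\eta_n = O(\log n/\sqrt n)$ with failure probability $o(n^{-1})$; Borel--Cantelli then upgrades this to almost sure convergence.

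Given the coupling, write $\sqrt n(P_n g - \PP_n g) = W_g + r_n(g)$ with $W_g := -\int B(F_{\PP_n})\, dg$ and $|r_n(g)| \le M \eta_n$. Conditionally on the data, $W_g$ is centered Gaussian with variance $\sigma^2_{n,g} = \PP_n(g - \PP_n g)^2$, and $\sigma^2_{n,g} \to P_0(g - P_0 g)^2$ uniformly in $g$ a.s., since uniformly bounded BV classes (and their squares) are $P_0$-Glivenko--Cantelli. DKW-type tail bounds for the posterior Dirichlet CDF combined with the coupling show that $\|Y_n\|_\infty$ is uniformly sub-Gaussian in $n$, so $e^{t\sqrt n(P_n g - \PP_n g)}$ is uniformly integrable for every $t \in \R$; dominated convergence yields $\E[e^{t\sqrt n(P_n g - \PP_n g)}\given Z_1,\dots,Z_n] - e^{t^2 \sigma^2_{n,g}/2} \to 0$ uniformly in $g$, completing the proof. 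The main obstacle is the strong approximation itself: the Dirichlet--gamma reduction is classical, but pushing the KMT embedding through the normalisation, composing with the empirical CDF, and handling the additional gamma contribution from the base measure $\nu$ requires the bulk of the work, and is flagged in the abstract as the novel contribution.
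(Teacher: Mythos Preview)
Your proposal is correct and follows essentially the same route as the paper: shift so $g(-\infty)=0$, integrate by parts to reduce to the CDF process, invoke the strong approximation of $\sqrt n(F_{P_n}-F_{\PP_n})$ by $B_n\circ F_{\PP_n}$ (the paper's Theorem~\ref{thm:str_approx_BB}, which is exactly the KMT-type coupling you describe), and then control exponential moments via the Gaussian term plus a remainder bounded by $M\eta_n$. The only cosmetic difference is that the paper routes the final step through Lemmas~\ref{LemmaUniformWeakConvergence} and~\ref{LemmaExpMoments} (reducing to verifying \eqref{EqExponentialMoment} and then splitting by Cauchy--Schwarz into $\E_Z[e^{cT\Delta_n}]$ and the explicit Gaussian moment), whereas you phrase it as uniform integrability of $e^{t\sqrt n(P_ng-\PP_ng)}$ via sub-Gaussianity of $\|Y_n\|_\infty$; both unwind to the same exponential tail bound on $\eta_n$.
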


Since bounded variation balls are universal Donsker classes, this is a significantly stronger requirement than $\G$ being $P_0$-Glivenko-Cantelli in Theorem \ref{thm}. We prove this result by exploiting a strong approximation, which establishes a rate of convergence for representations of these random variables defined on a common probability space and has various applications in probability and statistics, for instance studying distributional approximations of transformed random variables $\psi_n (\sqrt{n}(P_n-\P_n))$, where the functions $\psi_n$ depend on $n$. For an overview of the theory of strong approximations and a survey of their applications in probability and statistics, see Cs\"org\H{o} and R\'ev\'esz \cite{csorgo1981} and Cs\"org\H{o} and Hall \cite{csorgo1984}, respectively.

Let $F_n(t) = P_n1_{(-\infty,t]}$ and $\FF_n(t) = \PP_n 1_{(-\infty,t]}$ denote the distribution function of the posterior Dirichlet process draw $P_n$ and empirical distribution function, respectively. In a slight abuse of notation, we shall write $F\sim \DP(\nu)$ to mean $F=P1_{(-\infty,\cdot]}$ for $P\sim DP(\nu)$. We write $|\nu| = \nu(\R)$. Recall that a \textit{Brownian bridge} $\{B(s):s\in[0,1]\}$ is a mean-zero Gaussian process with covariance function $\E B(s_1)B(s_2) = s_1\wedge s_2-s_1s_2$. A \textit{Kiefer} process $\{K(s,t):s\in[0,1], t\geq 0\}$ is a two-parameter mean-zero Gaussian process with covariance function $\E K(s_1,t_1)K(s_2,t_2) = (s_1\wedge s_2-s_1s_2)(t_1\wedge t_2)$. For each $t>0$, $\{t^{-1/2}K(s,t):s\in[0,1]\}$ is a Brownian bridge, while $\{K(\cdot,n+1)-K(\cdot,n): n\geq1 \}$ is a sequence of independent Brownian bridges. 

An almost sure strong approximation of the posterior Dirichlet process was established by Lo \cite{lo1987}. He showed that on a suitable probability space, there exist random elements $F$, $K$ and $Z_1,Z_2,\dots\sim^{iid}F_0$ such that $F| Z_1,\dots,Z_n \sim DP(\nu+n\P_n)$ for every $n$, $K$ is a Kiefer process independent of $Z_1,Z_2,\dots$ and such that
\begin{align}\label{eq:lo}
\sup_{z\in\R} \left| \sqrt{n}(F-\FF_n)(z) - \frac{K(F_0(z),n)}{n^{1/2}} \right| = O\Bigl(\frac{(\log n)^{1/2} (\log \log n)^{1/4}}{n^{1/4}}\Bigr) \quad\text{a.s.}
\end{align}
Applications of \eqref{eq:lo} include studying the large sample behaviour of the Bayesian bootstrap and smoothed Dirichlet process posterior \cite{lo1987}, as well as receiver operating characteristic (ROC) curves \cite{gu2008}. We revisit this result by establishing an explicit coupling inequality in order to make uniform the constants in \eqref{eq:lo}. This for instance allows control of exponential moments, which is needed to prove Proposition \ref{BV_prop}.

We henceforth assume that the underlying probability space is rich enough that all random variables and processes subsequently introduced may be defined on it. Since the posterior distribution is conditional on the observations $Z_1,\dots,Z_n$, it is natural for a Bayesian to index the Gaussian process in \eqref{eq:lo} by the empirical distribution function $\F_n$ to obtain a conditional Gaussian approximation. The following is the explicit coupling inequality analogue of Lemma 6.3 of \cite{lo1987}. 

\begin{theorem}\label{thm:str_approx_BB} On a suitable probability space, there exist i.i.d.\ random variables $Z_1,Z_2,\dots \sim^{iid} P_0$ and $F_n$, with $F_n\given Z_1,\dots,Z_n \sim \DP(\nu + n\PP_n)$, and a sequence of Brownian bridges $(B_n)$ independent of $Z_1,Z_2,\dots$, such that \begin{align*}
                                                                                                                                                                                                                                                                                                       &P \left( \left. \sup_{z\in \R} \left| \sqrt{n}(F_n-\FF_n)(z)  -  B_n(\FF_n(z)) \right| \geq \frac{C_1(\log n + |\nu|) + x}{\sqrt{n}}\right|Z_1,\dots,Z_n \right) \\
                                                                                                                                                                                                                                                                                                       &\hspace{10cm}\leq C_2e^{-C_3x}, \end{align*} for all $x>0$ and $n \geq 2$, where $C_1-C_3$ are universal constants.  \end{theorem}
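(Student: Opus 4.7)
The plan is to use the standard normalized-Gamma-process representation of the posterior Dirichlet process, together with a Komlós--Major--Tusnády (KMT) coupling for centred exponentials. On a suitable enlarged space take mutually independent $Z_1, Z_2, \ldots \sim^{iid} P_0$, $Y_0\sim \Gamma(|\nu|,1)$, $Y_1, Y_2,\ldots \sim^{iid} \mathrm{Exp}(1)$, and $\tilde F \sim \DP(\nu/|\nu|)$ (set $Y_0\equiv 0$ if $|\nu|=0$), and let $T_n = Y_0 + \sum_{i=1}^n Y_i$. Then
$$F_n(z) := T_n^{-1}\Bigl(Y_0 \tilde F(z) + \sum_{i=1}^n Y_i\, 1_{Z_i\le z}\Bigr)$$
has, given $Z_1,\ldots,Z_n$, the required $\DP(\nu + n\PP_n)$ distribution. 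Writing $\tilde Y_i = Y_i -1$ and using $T_n - n = Y_0 + \sum_i \tilde Y_i$, a short algebraic manipulation yields the decomposition
$$\sqrt n(F_n - \FF_n)(z) = \frac{n}{T_n} M_n(z) + \frac{\sqrt n\, Y_0}{T_n}(\tilde F - \FF_n)(z), \qquad M_n(z) := \frac{1}{\sqrt n} \sum_{i=1}^n \tilde Y_i \bigl(1_{Z_i\le z} - \FF_n(z)\bigr).$$

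Next I approximate the main term $M_n$ by $B_n\circ \FF_n$. Order the data as $Z_{(1)}\le \cdots \le Z_{(n)}$ and let $Y_{(k)}$ be the correspondingly permuted $Y$-values. Because the $Y_i$ are i.i.d.\ $\mathrm{Exp}(1)$ and independent of the $Z_i$, exchangeability gives that $(Y_{(k)})$ is still i.i.d.\ $\mathrm{Exp}(1)$ and \emph{unconditionally} independent of $(Z_i)$. Applying KMT to the centred partial sums $\tilde S_k^* := \sum_{j=1}^k(Y_{(j)}-1)$, one obtains on a further enlarged space a standard Brownian motion $W$, independent of $(Z_i)$, with
$$P\Bigl(\max_{0\le k\le n}\bigl|\tilde S_k^* - W(k)\bigr| \ge C\log n + x\Bigr) \le Ke^{-\lambda x} \qquad (x>0).$$
The rescaling $B_n(s) := n^{-1/2}(W(ns) - s W(n))$ is then a Brownian bridge independent of $(Z_i)$. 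At the grid points $z=Z_{(k)}$ one has $\FF_n(z)=k/n$ (with the obvious modification at ties) and $M_n(Z_{(k)}) = n^{-1/2}(\tilde S_k^* - (k/n)\tilde S_n^*)$, so the KMT inequality immediately gives
$$\max_k \bigl|M_n(Z_{(k)}) - B_n(\FF_n(Z_{(k)}))\bigr| \le \frac{2(C\log n + x)}{\sqrt n}$$
with the claimed tail, and since $M_n$ and $B_n\circ \FF_n$ are constant on each $[Z_{(k)}, Z_{(k+1)})$, the bound extends uniformly over $z\in\R$.

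It remains to control the factor $n/T_n$ and the $Y_0$-term. Bernstein-type concentration for the sub-exponential variables $T_n\sim \Gamma(n+|\nu|,1)$ and $Y_0\sim \Gamma(|\nu|,1)$ gives $T_n \ge n/2$, $Y_0 \le C(|\nu| + x)$, and $|T_n - n - |\nu|| \le C(|\nu|+\sqrt{nx}+x)$ on an event of probability $\ge 1-Ce^{-cx}$. Combined with the crude a priori estimate $\sup_z|M_n(z)| \lesssim \sup_s|B_n(s)| + (\log n + x)/\sqrt n$ coming from the previous step (and the standard Gaussian tail for $\sup|B_n|$), both remainders $(n/T_n - 1)M_n(z)$ and $(\sqrt n Y_0/T_n)(\tilde F - \FF_n)(z)$ are $O((|\nu|+\log n + x)/\sqrt n)$ uniformly in $z$; combined with the KMT approximation of $M_n$, this yields the claimed bound after relabelling universal constants. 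The main subtlety is ensuring that $B_n$ is \emph{unconditionally} (not merely conditionally) independent of the data; this is precisely where the exchangeability of the sorting step is essential, since the $Y_{(k)}$ then remain independent of the $Z_i$ and the KMT construction of $W$ can be realised as a measurable function of $(Y_{(k)})$ and an auxiliary independent randomiser. A secondary bookkeeping point is that the Gamma tails here are only sub-exponential, so the various constants must be tracked carefully to produce the clean tail $C_2 e^{-C_3 x}$ rather than a mixed sub-Gaussian/sub-exponential form.
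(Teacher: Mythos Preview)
Your approach is correct and closely related to, but genuinely different from, the paper's. The paper first peels off the $\nu$-contribution via the Beta--Dirichlet decomposition $F_n = V_n Q + (1-V_n)\bar F$ with $V_n\sim\Beta(|\nu|,n)$, then represents the Bayesian bootstrap $\bar F$ through \emph{uniform spacings}: taking $U_1,\ldots,U_{n-1}\sim U(0,1)$ independent of the data and assigning the $i$th spacing to $Z_{(i)}$ gives $\bar F(Z_{(k)})=U_{(k)}$ exactly, so that $\sqrt n(\bar F-\FF_n)(Z_{(k)})$ is (up to the $n$ versus $n-1$ bookkeeping handled by their terms $I_B,II_B,III_B$) the uniform \emph{quantile} process, to which they apply the Cs\"org\H o--R\'ev\'esz coupling rather than KMT for partial sums. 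Your Gamma-process route avoids that $n$ versus $n-1$ algebra and uses the more commonly quoted partial-sum KMT, at the cost of two extra remainder terms ($n/T_n-1$ and the $Y_0$-piece) whose bookkeeping you correctly flag. One advantage the paper's construction buys is cleaner independence: since the $U_i$ never touch the $Z_i$ (the spacings are ordered \emph{intrinsically}, not by the data), the Brownian bridges are built from $(U_i)$ alone, so the whole \emph{sequence} $(B_n)_n$ is independent of $(Z_i)_i$. Your sorting step gives each $B_n$ independent of $(Z_i)$ by exchangeability, but the sorted arrays $(Y_{(k)}^{(n)})_k$ for different $n$ are jointly entangled with $(Z_i)$; this is harmless for the applications in the paper (only marginal independence is ever used), and is easily repaired by either assigning $Y_i$ to $Z_{(i)}$ from the outset (so the partial sums $S_k=\sum_{j\le k}Y_j$ are data-free) or by using fresh exponentials for each $n$. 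A small slip: your $\tilde F$ should be $\DP(\nu)$, not $\DP(\nu/|\nu|)$, though since only $|\tilde F-\FF_n|\le 1$ is used this is inconsequential.
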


This result says one can couple the Dirichlet process posteriors to a sequence of Brownian bridges independent of the underlying data. The theorem could also be rephrased with the random variables $Z_1,Z_2,\dots$ replaced by any real numbers $z_1,z_2,\dots$ to emphasize this independence.

For $x=x_n$ taken equal to a constant times $\log n$, the right side sums finite over $n$, 
and hence the complement of the events at $x_n$ are valid for every sufficiently large $n$, by the Borel-Cantelli lemma,
for almost every sequence $Z_1,Z_2,\ldots$.
Provided that $|\nu| = O(\log n)$, this yields that, for almost every sequence $Z_1,Z_2,\dots$,
$$\sup_{z\in \R} \left| \sqrt{n}(F-\FF_n)(z)  -  B_n(\FF_n(z)) \right|=O(n^{-1/2}\log n),$$
which improves on the rate $n^{-1/4}(\log n)^{1/2}(\log \log n)^{1/4}$ in Lemma~6.3 of \cite{lo1987}.
This is because we replace the KMT coupling used in \cite{lo1987}, which involves a Kiefer process, with a direct quantile coupling due to \cite{csorgo1978} involving dependent Brownian bridges. The following is the analogous result when the Brownian bridges are related amongst themselves by tying them to a Kiefer process $K(\cdot,n) = \sum_{i=1}^n B_i$ for $(B_i)$ independent Brownian bridges.

\begin{theorem}\label{thm:str_approx_KP}
On a suitable probability space, there exist i.i.d.\ random variables $Z_1,Z_2,\dots \sim^{iid} P_0$ and $F_n$, with 
$F_n\given Z_1,\dots,Z_n \sim \DP(\nu + n\PP_n)$, and a Kiefer process $K$ independent of $Z_1,Z_2,\dots$, such that
\begin{align*}
&P \Bigg( \sup_{z\in \R} \left| \sqrt{n}(F_n-\FF_n)(z)- n^{-1/2}K(\F_n(z),n) \right|  \\
&\qquad\qquad\qquad \geq  \frac{C_1|\nu| + x\log n}{n^{1/2}} + \frac{C_2\sqrt{\log n}\, x^{3/4}}{n^{1/4}}   \Bigg| Z_1,\dots,Z_n \Bigg) 
  \leq C_3e^{-C_4x},
\end{align*}
for all $x>0$ and $n \geq 2$, where $C_1-C_4$ are universal constants.
\end{theorem}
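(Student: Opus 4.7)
The plan is to combine the Brownian bridge coupling of Theorem~\ref{thm:str_approx_BB} with a Komlós--Major--Tusnády (KMT) strong approximation tying the per-$n$ Brownian bridges together as slices of a single Kiefer process. The additional $n^{-1/4}\sqrt{\log n}\,x^{3/4}$ term in the bound, which is absent from Theorem~\ref{thm:str_approx_BB}, is characteristic of the KMT rate for Kiefer-process embeddings and signals that this extra coupling step is what drives the worse rate.

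First, I would invoke Theorem~\ref{thm:str_approx_BB} to construct on a suitable probability space the posterior draws $F_n$ together with Brownian bridges $B_n$, independent of the data, satisfying
$$\sup_z \bigl|\sqrt{n}(F_n-\FF_n)(z) - B_n(\FF_n(z))\bigr| \le \frac{C(\log n + |\nu|) + y}{\sqrt{n}}$$
with conditional probability at least $1-Ce^{-Cy}$. The bridges $B_n$ produced by the underlying Dirichlet quantile construction are naturally independent across~$n$ since they are driven by independent beta random variables at each sample size.

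Second, I would enlarge the probability space to jointly construct these $B_n$ together with a Kiefer process $K$, still independent of the data, via a KMT-type strong approximation. The target estimate is
$$\sup_s \bigl|B_n(s) - n^{-1/2}K(s,n)\bigr| \le \frac{Cx\log n}{\sqrt{n}} + \frac{C\sqrt{\log n}\,x^{3/4}}{n^{1/4}}$$
conditionally on the data with probability at least $1-Ce^{-Cx}$. This is analogous to the classical Hungarian embedding of an empirical process into a Kiefer process and can be executed at the level of the underlying beta random variables of the Dirichlet construction. The triangle inequality
$$\bigl|\sqrt{n}(F_n-\FF_n)(z) - n^{-1/2}K(\FF_n(z),n)\bigr| \le \bigl|\sqrt{n}(F_n-\FF_n)(z) - B_n(\FF_n(z))\bigr| + \bigl|B_n(\FF_n(z)) - n^{-1/2}K(\FF_n(z),n)\bigr|,$$
together with a union bound over the two exponential-tail events, then yields the theorem after relabelling~$x$.

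The main obstacle is the second step: producing the KMT coupling with an explicit $e^{-Cx}$ tail. Because independent Brownian bridges cannot be made uniformly close to the highly correlated slices $n^{-1/2}K(\cdot,n)$ of a single Kiefer process, the $B_n$'s must themselves be re-constructed jointly with~$K$ rather than taken rigidly from Step~1; running through a Hungarian-school dyadic embedding argument while tracking constants, and verifying that the beta random variables driving the Dirichlet quantile coupling admit such an embedding, is the technical crux.
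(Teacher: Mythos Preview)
Your two-step plan---first invoke Theorem~\ref{thm:str_approx_BB}, then couple those bridges to a Kiefer process---does not work as stated, and the paper in fact avoids Theorem~\ref{thm:str_approx_BB} entirely here.

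The concrete problem is your Step~2. The bridges $B_n$ in Theorem~\ref{thm:str_approx_BB} come from the Cs\"org\H{o}--R\'ev\'esz quantile coupling \eqref{eq:quantile_str_approx} applied to the order statistics of a \emph{single} sequence $U_1,U_2,\ldots$ of uniforms, so $B_n$ and $B_{n+1}$ are not independent (your claim to the contrary is incorrect), nor are they related in any way that lets you tie them to the slices $n^{-1/2}K(\cdot,n)$ of one Kiefer process. You recognise this obstacle yourself, and your proposed fix---``re-construct the $B_n$'s jointly with $K$''---is really an instruction to abandon the route through Theorem~\ref{thm:str_approx_BB} and start over. That is exactly what the paper does.

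The paper's argument works directly with the Bayesian bootstrap representation \eqref{eq:BB_draw} and applies the KMT coupling \eqref{eq:KMT} to the \emph{empirical} process $\alpha_{n-1}$ of the uniforms $U_1,\ldots,U_{n-1}$, which produces a Kiefer process $\widetilde K$ with rate $(\log n)^2/\sqrt n$. One then sets $K=-\widetilde K$ and decomposes $\|\sqrt n(\bar F-\FF_n)-n^{-1/2}K(\FF_n,n)\|_\infty$ into $I_K+II_K+n^{-1/2}$ much as in \eqref{eq:Lo argument}. The crucial point is that the KMT bound controls $K$ at the random locations $U_{(i)}$, whereas we need it at the deterministic points $i/(n-1)$; bridging this gap costs a modulus-of-continuity term $|K(U_{(i)},n-1)-K(i/(n-1),n-1)|$, and it is \emph{this} term, handled via Lemma~\ref{lem:BBridge_sup}, that generates the $n^{-1/4}\sqrt{\log n}\,x^{3/4}$ contribution. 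So the extra $n^{-1/4}$ is not the KMT embedding rate itself (which is $n^{-1/2}$ up to logs) but the price of passing from the empirical to the quantile side when the approximating process is a Kiefer process rather than a per-$n$ Brownian bridge.
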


Theorem \ref{thm:str_approx_BB} does not say anything about the joint distribution in $n$ of the corresponding Brownian bridges and thus only ``in probability" or ``in distribution" limit results can be proved. On the other hand, despite the slower convergence rate, Theorem~\ref{thm:str_approx_KP} can be used to establish the almost sure limiting behaviour of statistics of interest based upon $\sqrt{n}(F_n-\FF_n)(z)$, for instance a law of the iterated logarithm.

If $|\nu| = O(n^{1/4}(\log n)^{5/4})$, the above yields $P(\cdot|Z_1,Z_2,\dots)-$almost sure order 
$n^{-1/4}(\log n)^{5/4}$, significantly slower 
than the rate in Theorem~\ref{thm:str_approx_BB}. In Theorem \ref{thm:str_approx_KP} we follow the approach of \cite{lo1987} of using the KMT coupling rather than a quantile coupling as in Theorem \ref{thm:str_approx_BB}. Indeed, up to logarithmic factors, a better rate is not obtainable for coupling a quantile process with a Kiefer process \cite{deheuvels1998}, as opposed to dependent Brownian bridges. We obtain a slightly slower almost sure rate than the $n^{-1/4}(\log n)^{1/2}(\log \log n)^{1/4}$ achieved in Lemma 6.3 of \cite{lo1987} due to technical arguments used to make the coupling non-asymptotic. 

We may also index the Brownian bridges by the true distribution function $F_0$ at the expense of a slower rate. The following is the coupling inequality analogue of Theorem~2.1 of \cite{lo1987}.

\begin{corollary}\label{cor:trueF0_BB}
On a suitable probability space, there exist random variables $Z_1,Z_2,\dots \sim^{iid} F_0$ and $F_n$, 
with $F_n\given Z_1,\dots,Z_n \sim \DP(\nu + n\PP_n)$, and a sequence of Brownian bridges $(B_n)$ independent of $Z_1,Z_2,\dots$, such that for any $y>0$, the event
$$A_{n,y}=\{ \sqrt{n} \|\F_n-F_0\|_\infty \leq y \}$$
satisfies $P(A_{n,y}) \geq 1 -2e^{-2y^2}$ and
\begin{align*}
&P \Bigg( \sup_{z\in \R} \left| \sqrt{n}(F_n-\F_n)(z) -  B_n(F_0(z)) \right| \geq \frac{C_1(\log n + |\nu|) + x}{n^{1/2}}\\
&\qquad\qquad\qquad\qquad + \frac{C_2 \sqrt{y}(\sqrt{\log n}+\sqrt{x})}{n^{1/4}}  \Bigg| Z_1,\dots,Z_n \Bigg)1_{A_{n,y}} 
 \leq C_3e^{-C_4x},
\end{align*}
for all $x>0$ and $n \geq 2$, where $C_1-C_4$ are universal constants.
\end{corollary}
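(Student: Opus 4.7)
My plan is to build on the coupling already produced by Theorem \ref{thm:str_approx_BB}, keeping the same triple $(Z_1,Z_2,\ldots,F_n,B_n)$, and to exchange the random time $\FF_n(z)$ for the deterministic time $F_0(z)$ inside $B_n$. The price of this exchange is controlled by the modulus of continuity of the Brownian bridge $B_n$ combined with a Dvoretzky--Kiefer--Wolfowitz (DKW) control of $\|\FF_n-F_0\|_\infty$, and the latter also supplies the first claim about $P(A_{n,y})$.

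Concretely, I would decompose
\begin{align*}
\sqrt{n}(F_n-\FF_n)(z) - B_n(F_0(z))
&= \bigl[\sqrt{n}(F_n-\FF_n)(z) - B_n(\FF_n(z))\bigr] \\
&\quad + \bigl[B_n(\FF_n(z)) - B_n(F_0(z))\bigr].
\end{align*}
Theorem \ref{thm:str_approx_BB} dispatches the first bracket: conditionally on $Z_1,\ldots,Z_n$, its supremum over $z$ is bounded by $(C_1(\log n+|\nu|)+x)/\sqrt{n}$ except on a set of conditional probability at most $C_2 e^{-C_3 x}$. The claim $P(A_{n,y})\geq 1-2e^{-2y^2}$ is immediate from the DKW inequality.

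For the second bracket, note that on $A_{n,y}$ we have $|\FF_n(z)-F_0(z)|\leq y/\sqrt n$ for every $z\in\RR$, so
$$\sup_{z\in\RR}|B_n(\FF_n(z))-B_n(F_0(z))| \leq \omega_n(y/\sqrt n),\qquad \omega_n(\delta):=\sup_{\substack{s,t\in[0,1] \\ |s-t|\leq\delta}}|B_n(s)-B_n(t)|.$$
Since $B_n$ is independent of $Z_1,\ldots,Z_n$, its conditional distribution given the data is still a Brownian bridge on $[0,1]$, so a classical L\'evy-type modulus-of-continuity estimate (Gaussian concentration plus a chaining bound, see e.g.\ Cs\"org\H{o}--R\'ev\'esz \cite{csorgo1981}) provides universal constants $c,c'$ with
$$P\bigl(\omega_n(\delta) \geq c\sqrt{\delta}\bigl(\sqrt{\log(1/\delta)}+\sqrt{x}\bigr)\bigr) \leq c' e^{-x},$$
for all $\delta\in(0,1)$ and $x>0$. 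Taking $\delta=y/\sqrt n$ and using $\sqrt{\log(1/\delta)}\leq\sqrt{\log n}$ (the complementary regime of very small $y$ only makes $A_{n,y}$ a more stringent event on which the same bound remains valid after adjusting the constant) produces exactly $C_2\sqrt y(\sqrt{\log n}+\sqrt x)/n^{1/4}$.

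Combining the two conditional deviation inequalities via a union bound on $A_{n,y}$ and absorbing $C_2e^{-C_3x}+c'e^{-x}$ into a single $C_3e^{-C_4 x}$ yields the claimed inequality. I expect the only real obstacle to be sourcing the modulus-of-continuity bound in precisely the non-asymptotic exponential form above with the additive $\sqrt{\log(1/\delta)}+\sqrt{x}$ structure; once that is in hand, the rest is a routine triangle inequality, DKW, and independence of $B_n$ from the data.
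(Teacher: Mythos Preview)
Your proposal is correct and follows essentially the same route as the paper: DKW for $P(A_{n,y})$, the same triangle-inequality decomposition, Theorem~\ref{thm:str_approx_BB} for the first bracket, and a Brownian-bridge modulus-of-continuity bound (Dudley plus Borell) for the second, followed by a union bound. The paper packages your modulus-of-continuity step as the second inequality of Lemma~\ref{lem:BBridge_sup}, which it then applies on $A_{n,y}$ using $\|\FF_n-F_0\|_\infty\le y/\sqrt n$; the only point requiring slightly more care than your parenthetical remark is replacing $\sqrt{\log(1/\delta)}$ by $\sqrt{\log n}$ uniformly in $y$, which the paper handles (somewhat indirectly) via Mogul'ski\u{\i}'s lower bound on $\|\FF_n-F_0\|_\infty$, while your observation that on $A_{n,y}$ the increment is bounded by the monotone quantity $\sqrt{\delta\log(1/\delta)}$ and that extreme small-$y$ cases make $1_{A_{n,y}}=0$ (or $\FF_n=F_0$) would close the gap just as well.
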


The Bayesian interpretation is that there are events $(A_{n,y})$ of high $P_0^n$-probability depending only on the observations $Z_1,\dots,Z_n$ on which one can approximate the posterior Dirichlet process with a sequence of Brownian bridges independent of the underlying data. If $|\nu| = O(n^{-1/4}(\log n)^{3/4})$,
setting $y = \sqrt{\delta\log n}$ with $\delta>1/2$ gives that for $P_0^\infty$-almost every sequence $Z_1,Z_2,\dots,$ we have approximation rate $n^{-1/4}(\log n)^{3/4}$, $P(\cdot|Z_1,Z_2,\dots)-$almost surely. A similar, if more complicated, expression can be proved with the Brownian bridges $(B_n)$ replaced by the Kiefer process $K$, in particular yielding an almost sure rate $O(n^{-1/4}(\log n)^{5/4})$, for $P_0^\infty$-almost every sequence $Z_1,Z_2,\dots$.

\section{Proofs}

\subsection{Proof of Theorem~\ref{thm}}

For given $Z_1,Z_2,\ldots, $ the Dirichlet process posterior distribution can be represented in law as the
convex combination
\begin{align}\label{EqDP}
P_ng= V_n \, Qg + (1-V_n)  \frac{\sumin W_ig(Z_i)}{\sumin W_i},
\end{align}
where $V_n\sim \Beta(|\nu|,n)$, $Q\sim \DP(\nu)$, and $W_1,W_2,\ldots$ are i.i.d.\ exponential variables
with mean 1, all variables $V_n, Q,, W_1, W_2,\ldots$ independent. 
This follows for instance from Theorem~14.37 in \cite{vandervaartbook2017} (with $\s=0$),
or representation properties of the Dirichlet distribution as in Proposition~G.10 in the same reference.
For $\nu=0$, the variables $Q$ and
$V_n$ should be interpreted as 0, and the first term vanishes. 
With the notation $\bar{W}_n = n^{-1} \sum_{i=1}^n W_i$, some algebra gives 
\begin{align}
\sqrt n(P_ng-\PP_ng)&=\sqrt n V_n\Bigl(Q g-\frac{\sumin W_ig(Z_i)}{\sumin W_i}\Bigr)\nonumber\\
&\qquad\qquad\qquad+\frac1 {\bar W_n}\,\frac1{\sqrt n}\sumin (W_i-1)(g (Z_i)-\PP_ng).
\label{EqRepresentation}
\end{align}
We show that the first term is negligible and the second tends to a normal distribution.

The variable $V_n$ is of the order $1/n$ and the first term in brackets on the right side 
of \eqref{EqRepresentation} is bounded above by 
$Q |g|+\max_{1\le i\le n}|g(Z_i)|$.
Because $\E Q |g|=\nu |g|/|\nu|<\infty$, and  by Lemma~\ref{LemmaMaxima} (below),
since $P_0g^2<\infty$,
\begin{equation}
\label{EqMax}
\frac 1{\sqrt n}\max_{1\le i\le n} |g(Z_i)|\ra0,\qquad \text{a.s.},
\end{equation}
the first term on the right tends to zero, in distribution conditionally given almost every sequence
$Z_1,Z_2,\ldots$, as $n\ra\infty$. 

The leading factor $1/\bar W_n$ of the second term on the right of \eqref{EqRepresentation} 
tends to 1 almost surely, by the strong law of large numbers.
If $P_0g^2<\infty$, then $\PP_n g\ra P_0g$ and $\PP_n g^2\ra P_0g^2$, almost surely.
This may be used together with \eqref{EqMax} to show that, for every $\e>0$,
\begin{align*}
  \frac1n\sumin \E \Bigl[(W_i-1)^2 (g(Z_i)-\PP_ng)^2\given Z_1, Z_2,\ldots\Bigr]\ra P_0(g-P_0g)^2
=: \sigma_g^2,&\qquad\text{a.s.}\\
\frac1n\sumin \E \Bigl[(W_i-1)^2 (g(Z_i)-\PP_ng)^21_{|W_i-1||g(Z_i)-\PP_ng|>\e\sqrt n}\given Z_1, Z_2,\ldots\Bigr]
\ra 0&,\qquad\text{a.s.}
\end{align*}
The Lindeberg central limit theorem then gives that, given almost every sequence $Z_1,Z_2,\ldots$,
the sequence $n^{-1/2}\sumin (W_i-1)(g(Z_i)-\PP_ng)$ tends in distribution
to a $N(0,\s_g^2)$-distribution.

The preceding together with Slutsky's lemma give that
\begin{equation}
\label{EqWeakConvergenceSingleg}
\sqrt n(P_ng-\PP_ng)\given Z_1, Z_2,\ldots\weak N(0,\s_g^2),\qquad\text{a.s.}
\end{equation}
If we would know that the moment generating function of the variables on the left
were bounded, then this would imply convergence of exponential moments, and the
proposition would be proved for $\G=\{g\}$.
The approach to proving the proposition will be to strengthen first the preceding display to uniformity in $g$, and next
show that exponential moments of the variables on the left are suitably bounded.

For the uniformity, we use the assumption that $\G$ is Glivenko-Cantelli. This is not overly strong, 
and it may be not far off from necessary. Indeed, if the variables $W_i-1$ were
standard normal instead of exponential, and the leading factor
$1/\bar W_n$ were not present and $\nu=0$, then the conditional distribution of the left side of the
preceding display would be $N\bigl(0,\PP_n(g-\PP_ng)^2\bigr)$, and convergence
of these normal distributions to $N(0,\s_g^2)$ would imply the convergence
$\PP_n(g-\PP_ng)^2\ra \s_g^2$, uniformly in $g$ if the convergence in distribution were
uniform. This is close to the Glivenko-Cantelli property.

To take account of possible non-measurability of the supremum in Theorem~\ref{thm}, we use
an explicit bound on the distance between the distributions in 
\eqref{EqWeakConvergenceSingleg}. 

\begin{lemma}
For independent mean-zero random variables
$X_1,\ldots, X_n$,  the bounded Lipschitz distance between the law of $n^{-1/2}\sumin X_i$ and the mean-zero normal
distribution with the same variance satisfies, for any $0<\e<1$, and 
$H(u)=u+u^{1/3}$ and $H_0(u)=u^{1/4}\bigl(1+|\log u|^{1/2}\bigr)$,
\begin{align}
&d\Bigl( \L\Bigl(\frac 1{\sqrt n}\sumin X_i\Bigr), N\Bigl(0, \frac 1n\sumin \E X_i^2\Bigr)\Bigr)\nonumber\\
&\qquad\qquad\lesssim \e+ \frac 1{\e^2} H\Bigl(\frac 1n\sumin \E X_i^21_{|X_i|>\e\sqrt n}\Bigr)
+ H_0\Bigl(\frac \e n\sumin \E X_i^2\bigr)
\label{EqCLT}
\end{align}
\end{lemma}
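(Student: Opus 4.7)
My plan follows the standard template for a quantitative Lindeberg-type CLT: truncate the summands, replace them one at a time by matched Gaussians against a smoothed test function, and optimize the smoothing scale. First I would introduce the truncation $Y_i=X_i\mathbf{1}_{|X_i|\le\e\sqrt n}-\E[X_i\mathbf{1}_{|X_i|\le\e\sqrt n}]$, which is mean zero and bounded by $2\e\sqrt n$; the residual $S_n-T_n$ with $T_n:=n^{-1/2}\sum_i Y_i$ has variance at most $L_n(\e):=n^{-1}\sum_i\E X_i^2\mathbf{1}_{|X_i|>\e\sqrt n}$. Since a bounded-Lipschitz test function $f$ satisfies $|f(x)-f(y)|\le\min(2,|x-y|)$, I can bound $|\E f(S_n)-\E f(T_n)|$ in two ways: directly by Cauchy--Schwarz, giving $\sqrt{L_n(\e)}$; or via Markov applied to $|S_n-T_n|$ followed by optimization of the threshold, giving $L_n(\e)^{1/3}$. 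Both are dominated by $H(L_n(\e))=L_n(\e)+L_n(\e)^{1/3}$ regardless of the size of $L_n(\e)$, so this step produces the $H(L_n(\e))$ piece of the bound. The prefactor $\e^{-2}$ in the statement is then a convenient loosening (recall $\e<1$) that lets downstream constants be absorbed cleanly.

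Next, for the quantitative CLT on the truncated sum $T_n$, I would smooth the test function by Gaussian convolution at bandwidth $\delta$: $f_\delta=f\ast\phi_\delta$ satisfies $\|f-f_\delta\|_\infty\lesssim\delta$ and $\|f_\delta^{(k)}\|_\infty\lesssim\delta^{-(k-1)}$ for $k=2,3$. Apply Lindeberg's exchange, replacing each $Y_i$ by an independent $\zeta_i\sim N(0,\mathrm{Var}(Y_i))$ and third-order Taylor expanding $f_\delta$; the first two moments match, leaving a remainder at index $i$ bounded by $\|f_\delta'''\|_\infty n^{-3/2}(\E|Y_i|^3+\E|\zeta_i|^3)$. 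Boundedness $|Y_i|\le 2\e\sqrt n$ gives $\sum_i\E|Y_i|^3\lesssim\e\sqrt n\cdot n\s^2$, while the Gaussian third moments satisfy $\sum_i(\E X_i^2)^{3/2}\le\sqrt{\max_i\E X_i^2}\cdot n\s^2\le n^{3/2}\s^3$, so both sides contribute $\lesssim\e\s^2/\delta^2$. Adding the smoothing cost $\delta$ and comparing $N(0,n^{-1}\sum_i\mathrm{Var}(Y_i))$ to $N(0,\s^2)$ by direct Gaussian coupling (cost $\lesssim\sqrt{L_n(\e)}$), the total is $\lesssim\delta+\e\s^2/\delta^2+\sqrt{L_n(\e)}$; choosing $\delta\sim(\e\s^2)^{1/3}$ gives a contribution $\sim(\e\s^2)^{1/3}$, which is majorized by $H_0(\e\s^2)$.

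The main technical difficulty is recovering the precise functional forms $H$ and $H_0$ --- in particular the $|\log|^{1/2}$ correction in $H_0$ --- rather than crude polynomial rates. This likely requires a finer optimization in which the Gaussian side of the Lindeberg exchange is controlled using explicit Gaussian tail bounds in place of the naive third-moment estimate used above, and the smoothing bandwidth is tuned with a mild logarithmic dependence on $\e\s^2$. The non-identical distribution of the summands is handled throughout by the estimate $\max_i\E X_i^2\le n\s^2$, which suffices because the target is phrased in terms of the average variance $\s^2=n^{-1}\sum_i\E X_i^2$.
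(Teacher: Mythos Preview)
Your approach differs substantially from the paper's. The paper works in the Prohorov metric and assembles the bound from three citations: Strassen's theorem handles the truncation step (Chebyshev gives $P(|S_n-T_n|>\e)\le \e^{-2}L_n(\e)$, so the Prohorov distance is at most $\max(\e,\e^{-2}L_n(\e))$, which is where the isolated $\e$ and the $\e^{-2}$ prefactor in the statement come from), Theorem~B of Dehling (1983) supplies the $H_0$ bound for the CLT on the truncated sum directly, and Lemma~2.1 of Dehling bounds the distance between the two Gaussians by $L_n(\e)^{1/3}$; finally Prohorov is converted to bounded Lipschitz. You instead argue entirely with bounded Lipschitz test functions and a first-principles Lindeberg swap with Gaussian smoothing. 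This is more elementary and self-contained, avoids the Prohorov detour, and in fact yields a sharper truncation estimate ($L_n(\e)^{1/3}$ versus $\e+\e^{-2}L_n(\e)$).

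There is one slip in your Lindeberg step: bounding the Gaussian third moments by $\sum_i(\E X_i^2)^{3/2}\le n^{3/2}\s^3$ gives a contribution $\s^3/\d^2$, not $\e\s^2/\d^2$ as you claim. The fix is to use also $\mathrm{Var}(Y_i)\le 4\e^2 n$ (from $|Y_i|\le 2\e\sqrt n$), so that $(\mathrm{Var}\,Y_i)^{3/2}\le 2\e\sqrt n\cdot\mathrm{Var}(Y_i)\le 2\e\sqrt n\,\E X_i^2$ and hence $\sum_i\E|\zeta_i|^3\lesssim \e n^{3/2}\s^2$, matching the $Y_i$ side. With this correction your optimisation yields $(\e\s^2)^{1/3}$, and your worry about recovering the precise $H_0$ form is then misplaced: for bounded $u$ one has $u^{1/3}\lesssim u^{1/4}(1+|\log u|^{1/2})=H_0(u)$, while for large $u$ the right side of the lemma already exceeds $2\ge d$ and the bound is trivial. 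No logarithmic refinement is needed; your rate is actually tighter than $H_0$ in the regime that matters.
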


\begin{proof}
We combine results by  \cite{Yurinskii78} and \cite{Dehling83}, as outlined in the proof of Proposition~A.5.2 in
\cite{vandervaart1996}, which gives the lemma for i.i.d.\ variables. The first step is 
to note that, for $Z_i=X_i1_{|X_i|\le \e\sqrt n}-\E X_i1_{|X_i|\le \e\sqrt n}$, by Chebychev's inequality,
$$P\Bigl(\Bigl|\frac 1{\sqrt n}\sumin (X_i-Z_i)\Bigr|>\e\Bigr)
\le \frac1{\e^2 n} \sumin \E X_i^21_{|X_i|> \e\sqrt n}.$$
We can then use Strassen's theorem (see e.g.\ \cite{Dudley2002}, Theorem~11.6.2) to see that
the Prohorov distance between the laws of $n^{-1/2}\sumin X_i$ and $n^{-1/2}\sumin Z_i$ is bounded
above by the maximum of $\e$ and the right side of the last display. Next, we apply Theorem~B in 
\cite{Dehling83} to bound the Prohorov distance between the law of $n^{-1/2}\sumin Z_i$ and
the mean-zero normal distribution with the same variance by a multiple of $H_0 (\e n^{-1} \sum_{i=1}^n \E X_i^2 1_{X_i \leq \e\sqrt{n}})$, where we have used $\E |Z_i|^3 \leq 2\e \sqrt{n}\E X_i^2$. Since $\sup_{0<v\leq u} H_0(v) \leq CH_0(u)$ for an absolute constant $C>0$, this is bounded by a multiple of the third term of \eqref{EqCLT}.
Finally, the distance between this normal distribution and the normal distribution as in the lemma is bounded by $(n^{-1} \sumin \E X_i^21_{|X_i|>\e\sqrt n})^{1/3}$ using Lemma 2.1 of \cite{Dehling83}

These estimates use the Prohorov distance, but are also valid for the bounded Lipschitz
distance, which is bounded above by twice the
Prohorov distance (see \cite{Dudley2002}, Corollary~11.6.5). 
(Then we may also replace $u^{1/3}$ by $u^{1/2}$ in $H$.)
\end{proof}

The idea of the lemma is that the first and third
terms on the right can be made arbitrarily small by choice of $\e$ if $n^{-1}\sumin \E X_i^2$ remains bounded
(as $H_0(u)\ra 0$ as $u\downarrow 0$),
while for fixed $\e>0$ the middle term tends to zero as $n\ra\infty$ if the Lindeberg condition holds.

\begin{lemma}
\label{LemmaUniformWeakConvergence}
Suppose $\G$ is a $P_0$-Glivenko-Cantelli class of measurable functions $g: \X\to\RR$ 
with envelope function $G$ such that $\nu G<\infty$ and $P_0G^{2}<\infty$. 
Then the convergence in \eqref{EqWeakConvergenceSingleg} is uniform in $g\in\G$. More precisely,
for $d$ the bounded Lipschitz distance,
$$\sup_{g\in \G} d\Bigl(\L\bigl(\sqrt n(P_ng-\PP_ng)\given Z_1, Z_2,\ldots\bigr), N(0,\s_g^2)\Bigr)
\outeras 0.$$
\end{lemma}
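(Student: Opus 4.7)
The plan is to treat the two terms in the decomposition \eqref{EqRepresentation} separately by the triangle inequality for the bounded Lipschitz distance $d$. Writing
\[
\sqrt n(P_n g - \PP_n g) = A_n(g) + \tilde B_n(g) + \bigl(B_n(g) - \tilde B_n(g)\bigr),
\]
with $A_n(g) := \sqrt n\, V_n \bigl( Qg - \sumin W_ig(Z_i)/\sumin W_i \bigr)$, $\tilde B_n(g) := n^{-1/2}\sumin(W_i - 1)(g(Z_i) - \PP_n g)$ and $B_n(g) := \bar W_n^{-1} \tilde B_n(g)$, and using the standard inequality $d(\L(U+V),\L(V)) \le \E[|U| \wedge 2]$, the uniform bounded Lipschitz distance splits into three pieces, each of which I would handle separately. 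The measurable envelopes needed to upgrade almost sure to outer-almost sure convergence will all be $g$-free expressions in $M_n := \max_{i\le n} G(Z_i)$, $\PP_n G$ and $\PP_n G^2$.

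For $A_n(g)$, the envelope bound yields $|A_n(g)| \le \sqrt n\, V_n (QG + M_n)$, hence
\[
\E[|A_n(g)| \mid Z_1,\ldots,Z_n] \le \sqrt n\, \E V_n \cdot (\nu G/|\nu| + M_n),
\]
which is $o(1)$ almost surely uniformly in $g$, since $\E V_n = |\nu|/(|\nu|+n) = O(n^{-1})$ and $M_n = o(\sqrt n)$ almost surely by Lemma~\ref{LemmaMaxima} (using $P_0 G^2 < \infty$); the Bayesian bootstrap case $\nu = 0$ is trivial, since then $A_n \equiv 0$. For the difference $B_n(g) - \tilde B_n(g) = (\bar W_n^{-1} - 1)\tilde B_n(g)$, on the exponentially high-probability event $\{\bar W_n > 1/2\}$ one has $|\bar W_n^{-1} - 1| \le 2|\bar W_n - 1|$, and Cauchy--Schwarz gives
\[
\E\bigl[|\bar W_n - 1|\,|\tilde B_n(g)| \bigm| Z_1,\ldots,Z_n\bigr] \le n^{-1/2}\sqrt{\PP_n(g-\PP_n g)^2} \le n^{-1/2}\sqrt{\PP_n G^2},
\]
which is $o(1)$ almost surely uniformly in $g$; the complementary event contributes $O(P(\bar W_n \le 1/2)) = o(1)$ by standard large deviations for sums of i.i.d.\ exponentials.

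For the main term $\tilde B_n(g)$, I would apply the preceding quantitative CLT to the variables $X_i = (W_i-1)(g(Z_i) - \PP_n g)$, which are independent and mean-zero given $Z_1,\ldots,Z_n$, with $n^{-1}\sumin \E X_i^2 = \PP_n(g - \PP_n g)^2 \le \PP_n G^2$, uniformly bounded almost surely. The third term in \eqref{EqCLT} is then $H_0(\e\,\PP_n(g-\PP_n g)^2)$, which is uniformly small in $g$ for small $\e$. For the Lindeberg middle term, the crude bound $|g(Z_i) - \PP_n g| \le M_n + \PP_n G = o(\sqrt n)$ almost surely (uniformly in $i$) implies $\{|X_i| > \e\sqrt n\} \subseteq \{|W_i - 1| > c_n\}$ with $c_n \to \infty$, and the exponential tail of $W_i$ gives $\E[(W_i-1)^2 \mathbf{1}_{|W_i-1|>c_n}] \to 0$; averaging yields a uniform $o(1)$ bound on the Lindeberg contribution. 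This delivers uniform convergence $d\bigl(\L(\tilde B_n(g) \mid Z), N(0, \PP_n(g - \PP_n g)^2)\bigr) \to 0$ almost surely. To replace the random variance by $\sigma_g^2$, I would use the elementary bound $d(N(0,a^2),N(0,b^2)) \lesssim |a-b|$ and the identity $\PP_n(g - \PP_n g)^2 = \PP_n g^2 - (\PP_n g)^2$. Uniform convergence $\PP_n g \to P_0 g$ is the Glivenko--Cantelli hypothesis; the main technical obstacle is the uniform convergence $\PP_n g^2 \to P_0 g^2$, i.e., that $\G^2$ is $P_0$-Glivenko--Cantelli. Under the separability understood in the remark this is implied by $\G$ being GC with $L^2(P_0)$ envelope (Lemma~11 of \cite{rayvdv2018}), and outer-measurability for the non-measurable supremum over $\G$ is inherited from the $g$-free measurable dominating expressions above, as in Chapter~1.9 of \cite{vandervaart1996}.
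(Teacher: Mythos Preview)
Your proposal is correct and follows essentially the same route as the paper: apply the quantitative Lindeberg bound \eqref{EqCLT} to the variables $(W_i-1)(g(Z_i)-\PP_n g)$ conditionally on $Z_1,Z_2,\ldots$, obtain a $g$-free measurable upper bound via the envelope $G$, handle the perturbations from $A_n(g)$ and the factor $\bar W_n^{-1}$ by elementary bounded-Lipschitz inequalities, and finally swap the empirical variance $\sigma_{n,g}^2$ for $\sigma_g^2$. The paper packages the last two perturbations together via the single inequality $d(\L(\mu+\sigma X),\L(X))\le |\mu|+|\sigma-1|\,\E|X|$, whereas you split them and use Cauchy--Schwarz for the $\bar W_n$ piece; both work.

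There is one genuine gap. To pass from $\sigma_{n,g}^2$ to $\sigma_g^2$ uniformly you need $\{g^2:g\in\G\}$ to be $P_0$-Glivenko--Cantelli. You invoke Lemma~11 of \cite{rayvdv2018}, which requires \emph{separability} of $\G$; but the lemma as stated makes no separability assumption, and the remark you allude to concerns a different ($n$-dependent) setting. The paper instead uses the preservation theorem of van der Vaart and Wellner \cite{vdVW00}: if $\G$ is $P_0$-Glivenko--Cantelli with envelope $G$ satisfying $P_0G^2<\infty$, then so is $\{g^2:g\in\G\}$, with no separability needed. Replacing your citation by this one closes the gap; as written, your argument only proves the lemma under the extra hypothesis that $\G$ is separable.
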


\begin{proof}
By the square-integrability of $G$ and 
conservation of the Glivenko-Cantelli property under continuous transformations 
(see \cite{vdVW00}), the set $\{g^2: g\in \G\}$ is also Glivenko-Cantelli. 
Thus, for $\s_{n,g}^2=\PP_n(g-\PP_ng)^2$,
\begin{align*}
\sup_{g\in G} |\s_{n,g}^2-\s_g^2|&\outeras 0.
\end{align*}
We now apply \eqref{EqCLT} to the variables 
$X_i=(W_i-1)\bigl(g(Z_i)-\PP_ng\bigr)$ conditionally given $Z_1, Z_2,\ldots$, and that $\sup_{0<v\leq u} H_0(v) \leq CH_0(u)$,
to see that
\begin{align*}
&d\Bigl(\L\Bigl(\frac 1{\sqrt n}\sumin (W_i-1)\bigl(g(Z_i)-\PP_ng\bigr)\given Z_1,Z_2,\ldots,\Bigr), N(0,\s_{n,g}^2)\Bigr)\\
&\qquad \lesssim\e+ \frac 1{\e^2} H\Bigl( \sup_g \s_{n,g}^2 \E (W_1-1)^21_{|W_1-1|2\max_{1\le i\le n}G(Z_i)>\e \sqrt n }\Bigr) \\
& \qquad \qquad
+H_0\Bigl(\e\sup_g\s_{n,g}^2\Bigr).
\end{align*}
Here, $\sup_g \s_{n,g}^2$ can be further bounded by $\PP_nG^2$, resulting in a uniform upper on the distance that is a measurable function of $Z_1,Z_2,\ldots$. The resulting measurable upper bound tends to zero almost surely, as 
$n\ra\infty$ followed by $\e\ra 0$, because $\max_{1\le i\le n} G(Z_i)=o(\sqrt n)$ almost surely by
Lemma~\ref{LemmaMaxima}  and $P_0 G^2<\infty$.
Because $d\bigl(N(0,\s^2),N(0,\tau^2)\bigr)\le \sqrt{|s^2-\tau^2|}$ for any $\s,\tau>0$,
the variances $\s_{n,g}^2$ in the left side can be replaced by their limits $\s_g^2$ by the preceding
display.

This shows  the convergence of the second term 
in \eqref{EqRepresentation}, without the denominator $\bar W_n$, to the claimed Gaussian limits.
Here $\bar W_n\ra 1$, almost surely, and the first term is bounded
 above by $\sqrt n V_n \bigl(Q G+ \max_{1\le i\le n}G(Z_i)\bigr)$, which 
tends to zero almost surely, by the argument given before. Since $d\bigl(\L(\mu+\s X),\L(X)\bigr)
\le |\mu|+|\s-1|\,\E |X|$, for any $\mu,\s$ and random variable $X$, the 
scaling by $\bar W_n$ and shifting by the first term does not change the Gaussian limits.
\end{proof}

\begin{lemma}\label{LemmaExpMoments}
Suppose that the conclusion of Lemma~\ref{LemmaUniformWeakConvergence} holds and for some
$T>0$, there exists a measurable random variable $U$ such that
\begin{equation}
\label{EqExponentialMoment}
\limsup_{n\ra\infty} \sup_{g\in\G} 
\E\bigl[e^{T\sqrt{n}(P_ng-\PP_ng)} \given Z_1,\dots,Z_n\bigr]\le U <\infty,\qquad\text{ a.s.}
\end{equation}
Then \eqref{EqMain} holds for $0\le t<T$. Furthermore, if \eqref{EqExponentialMoment} holds
for some $T<0$, then \eqref{EqMain} holds for $T< t\le 0$.
\end{lemma}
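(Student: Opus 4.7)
The plan is to upgrade the uniform bounded-Lipschitz convergence supplied by Lemma~\ref{LemmaUniformWeakConvergence} to convergence of the Laplace transforms \eqref{EqMain} by the standard truncation plus uniform-integrability argument, using \eqref{EqExponentialMoment} as the integrability input. It suffices to treat the case $T>0$ and $0<t<T$: the case $t=0$ is trivial, and the case $T<0$ follows by applying the positive case to the class $-\G=\{-g:g\in\G\}$, whose Gaussian variances $\s_g^2$ and bounded-Lipschitz distances are invariant under $g\mapsto -g$.

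First I would approximate $x\mapsto e^{tx}$ by the Lipschitz truncation $f_M(x):=\min(e^{tx},M)$, whose Lipschitz constant is $tM$ and hence whose bounded-Lipschitz norm does not exceed $(1+t)M$. Because this truncation is fixed once $M$ is chosen, the conclusion of Lemma~\ref{LemmaUniformWeakConvergence} yields, for each $M>0$,
$$\sup_{g\in\G}\Bigl|\E\bigl[f_M\bigl(\sqrt n(P_ng-\PP_ng)\bigr)\given Z_1,\ldots,Z_n\bigr]-\E f_M\bigl(N(0,\s_g^2)\bigr)\Bigr|\outeras 0.$$

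Second I would control the truncation error via the elementary pointwise bound $e^{tx}\,1_{e^{tx}\ge M}\le M^{-(T-t)/t}e^{Tx}$, valid because $(e^{tx}/M)^{(T-t)/t}\ge 1$ on the indicator set. Integrating, this gives, for any random variable $X$ with finite $\E e^{TX}$,
$$\E\bigl[e^{tX}\,1_{e^{tX}\ge M}\bigr]\le M^{-(T-t)/t}\,\E e^{TX}.$$
Applied conditionally to $X=\sqrt n(P_ng-\PP_ng)$, hypothesis \eqref{EqExponentialMoment} bounds the supremum of this tail over $g\in\G$ by $M^{-(T-t)/t}U$ for all sufficiently large $n$, almost surely. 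For the Gaussian limit $X_g\sim N(0,\s_g^2)$, the Portmanteau theorem applied to the continuous bounded-below function $e^{Tx}$ gives $e^{T^2\s_g^2/2}=\E e^{TX_g}\le\liminf_n\E[e^{TX_n^g}\given Z_1,\ldots,Z_n]\le U$ almost surely, whence $\sup_g\s_g^2\le 2(\log U)/T^2$ and the Gaussian tail obeys the same bound $M^{-(T-t)/t}U$ uniformly in $g$.

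Combining the three estimates, for each fixed $M>0$,
$$\limsup_{n\to\infty}\sup_{g\in\G}\Bigl|\E\bigl[e^{t\sqrt n(P_ng-\PP_ng)}\given Z_1,\ldots,Z_n\bigr]-e^{t^2\s_g^2/2}\Bigr|\le 2M^{-(T-t)/t}\,U,\quad\text{a.s.,}$$
and letting $M\to\infty$ yields \eqref{EqMain}. The one point requiring care is to ensure that every supremum over $g$ is dominated by a measurable envelope, as demanded by the $\outeras$ convention; this is automatic here since each of the three dominating terms depends on $g$ only through $\s_g^2$ (globally bounded by $2(\log U)/T^2$), through the measurable envelope $U$, or through the measurable BL-distance envelope already supplied by Lemma~\ref{LemmaUniformWeakConvergence}. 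The main conceptual step is identifying $T/t>1$ as the conjugate exponent that trades a power of $M$ against the exponential moment $U$.
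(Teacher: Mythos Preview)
Your proof is correct and follows essentially the same truncation-plus-uniform-integrability approach as the paper: approximate $e^{tx}$ by $e^{tx}\wedge M$, invoke the uniform bounded-Lipschitz convergence from Lemma~\ref{LemmaUniformWeakConvergence}, and control the truncation tails via the inequality $e^{tx}1_{e^{tx}\ge M}\le M^{-(T-t)/t}e^{Tx}$. The only difference is that the paper bounds $\sup_{g}\sigma_g^2$ directly by $P_0G^2$ (borrowing the envelope moment from the \emph{hypotheses} of Lemma~\ref{LemmaUniformWeakConvergence}), whereas you extract $\sup_g\sigma_g^2\le 2(\log U)/T^2$ from \eqref{EqExponentialMoment} via Portmanteau---a slightly more self-contained route, since Lemma~\ref{LemmaExpMoments} as stated only assumes the \emph{conclusion} of Lemma~\ref{LemmaUniformWeakConvergence}.
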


\begin{proof}
For a fixed $t>0$ and $M>0$, the function $h_M(x)= e^{tx}\wedge M$ is bounded and Lipschitz. 
Lemma \ref{LemmaUniformWeakConvergence} thus gives that, with $\E_Z$ denoting the
conditional expectation given $Z_1,Z_2,\ldots$, 
$$\sup_{g\in\G} \Bigl| \E_Z\bigl[e^{t\sqrt n(P_ng-\PP_ng)}\wedge M\bigr]
-\int h_M\, dN(0,\s_g^2)\Bigr|\outeras 0.$$
Since $\sup_{g\in\G}\s_g^2\le P_0G^2 <\infty$, we can choose $M$ such that 
$|\int h_M\, dN(0,\s_g^2) -e^{t^2\s_g^2/2}|$ is
arbitrary small, uniformly in $g\in\G$. Furthermore, 
\begin{align*}
\Big| \E_Z\bigl[e^{t\sqrt n(P_ng-\PP_ng)}\wedge M- e^{t\sqrt n(P_ng-\PP_ng)}\bigr] \Big|
&\leq \E_Z\bigl[e^{t\sqrt n(P_ng-\PP_ng)}1_{e^{t\sqrt n(P_ng-\PP_ng)}\ge M}\bigr]\\
&\le \frac{1}{M^{(T-t)/t}}\,\E_Z\bigl[  e^{T\sqrt n(P_ng-\PP_ng)}\bigr].
\end{align*}
For $t<T$ and sufficiently large $M$ and $n$, this is arbitrarily small, uniformly in $g\in\G$, by assumption
\eqref{EqExponentialMoment}.

The proof of the assertion with $T<0$ is similar (or replace $P_n-\PP_n$ by $\PP_n-P_n$ in the argument).
\end{proof}

\begin{lemma}\label{LemmaMoments}
If $\G$ has envelope function $G$ such that $\int e^{tG}\,d\nu\le Ce^{ct^2}$ for every $t>0$ and some $C,c>0$, and
$P_0G^{2+\d}<\infty$ for some $\d>0$, then \eqref{EqExponentialMoment} holds for every $T$
in a sufficiently small neighbourhood of $0$.
\end{lemma}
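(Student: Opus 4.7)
The plan is to use the representation \eqref{EqRepresentation} to decompose $\sqrt n(P_ng-\PP_ng)=A_n+B_n$, where $A_n=\sqrt nV_n\bigl(Qg-\sum W_ig(Z_i)/\sum W_i\bigr)$ is the prior contribution and $B_n=(\bar W_n)^{-1}n^{-1/2}\sum_i(W_i-1)(g(Z_i)-\PP_ng)$ is the bootstrap contribution, and to bound $\E[e^{T(A_n+B_n)}\mid Z]$ via Cauchy--Schwarz as a product of bounds on $\E[e^{2TA_n}\mid Z]$ and $\E[e^{2TB_n}\mid Z]$ for $|T|$ sufficiently small. Throughout, all bounds depend on $g\in\G$ only through the envelope $G$ (through $\PP_nG^2$ and $\max_{i\le n}G(Z_i)$), so uniformity in $g$ is automatic. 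The main obstacle is the $1/\bar W_n$ factor inside $B_n$: it couples the $W_i$'s in the exponent and prevents direct use of their conditional independence; it will be handled by an event split exploiting the exponentially small probability of $\{\bar W_n<1/2\}$.

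For $B_n$, I would first use $\sum a_i=0$ with $a_i:=g(Z_i)-\PP_ng$ to rewrite $\sum(W_i-1)a_i=\sum W_ia_i$, and then apply the representation $W_i=T_nD_i$ for $(D_1,\dots,D_n)\sim\mathrm{Dir}(1,\dots,1)$ independent of $T_n=\sum W_i$, obtaining the compact form $B_n=\sqrt n\sum_iD_ia_i$ and the deterministic bound $|B_n|\le \sqrt n\max_i|a_i|\le 2\sqrt n\max_iG(Z_i)$. Split $\E[e^{TB_n}\mid Z]$ on $\{\bar W_n\ge 1/2\}$ and its complement. On the rare event $\{\bar W_n<1/2\}$ (probability $\le e^{-c_0n}$ by Chernoff for the $\mathrm{Exp}(1)$ sum), the deterministic bound together with $\max_iG(Z_i)=o(\sqrt n)$ a.s.\ (from $P_0G^2<\infty$ and Borel--Cantelli) yields a contribution $\le e^{2|T|\sqrt n\max_iG(Z_i)-c_0n}=o(1)$ a.s. On $\{\bar W_n\ge 1/2\}$, $|TB_n|\le 2|T||B'_n|$ with $B'_n=n^{-1/2}\sum(W_i-1)a_i$ a sum of conditionally independent centered variables; using $\E e^{s(W_i-1)}=e^{-s}/(1-s)$ and $-u-\log(1-u)\le u^2$ for $|u|\le 1/2$ gives $\log\E[e^{sB'_n}\mid Z]\le s^2\PP_n(g-\PP_ng)^2\le s^2\PP_nG^2$, valid once $|s|\max_i|a_i|\le\sqrt n/2$, which holds eventually for any fixed $s$. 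Since $\PP_nG^2\to P_0G^2$ a.s., the resulting bound is uniform in $g\in\G$.

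For $A_n$, bound $|A_n|\le \sqrt nV_n(QG+\max_iG(Z_i))$ and apply Cauchy--Schwarz to separate the two summands. The $\max_iG(Z_i)$ factor is conditionally deterministic in $Z$; by expanding the Beta density in powers of $s$ and using $(|\nu|+n)_k\ge n^k$ one derives the MGF bound $\E e^{sV_n}\le(1-s/n)^{-|\nu|}$ for $s<n$, so with $s=4|T|\sqrt n\max_iG(Z_i)=o(n)$ a.s.\ this factor tends to $1$. The $QG$ factor uses Jensen's inequality on the probability measure $Q$: $\E_Qe^{tQG}\le \nu(e^{tG})/|\nu|\le(C/|\nu|)e^{ct^2}$ by the sub-Gaussian hypothesis on $\nu$. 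Substituting $t=4|T|\sqrt nV_n$ and using $nV_n^2\le nV_n$ (since $V_n\in[0,1]$) reduces to the same Beta MGF bound at $s=16cT^2n$, yielding $(C/|\nu|)(1-16cT^2)^{-|\nu|}$, finite for $|T|<1/(4\sqrt c)$. Together these estimates make $\E[e^{TA_n}\mid Z]$ bounded by a deterministic constant uniformly in $g$, completing the proof for $|T|$ in the neighbourhood $|T|<1/(4\sqrt c)$.
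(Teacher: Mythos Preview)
Your proof is correct and follows the same high-level plan as the paper: decompose via \eqref{EqRepresentation}, apply Cauchy--Schwarz to separate the prior term $A_n$ from the bootstrap term $B_n$, and bound each uniformly in $g$ through the envelope $G$. Your treatment of $A_n$ is essentially the content of Lemma~\ref{LemmaExponentialMomentsBeta}(i)--(ii) derived inline: the elementary Beta moment-generating bound $\E e^{sV_n}\le(1-s/n)^{-|\nu|}$ (via $(|\nu|+n)_k\ge n^k$) together with Jensen's inequality $\E e^{tQG}\le \nu(e^{tG})/|\nu|$ and the trick $nV_n^2\le nV_n$ are exactly the ingredients the paper packages into that auxiliary lemma.

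The handling of $B_n$ is, however, genuinely different and in fact cleaner. The paper removes the factor $1/\bar W_n$ by writing $\E_Z e^{TB_n}$ as a tail integral and splitting at each level $x$ on $\{\bar W_n<1-\sqrt{3x/n}\}$, which reduces matters to bounding $\E_Z e^{2TB'_n}$; it then controls this via an Orlicz-norm inequality (Proposition~A.1.6 and Lemma~2.2.2 of \cite{vandervaart1996}), which introduces a $\log n$ factor and is the sole place the hypothesis $P_0G^{2+\delta}<\infty$ enters. Your single split at $\{\bar W_n<1/2\}$ combined with the direct moment-generating computation $\log\E_Z e^{sB'_n}\le s^2\PP_n(g-\PP_ng)^2$ (valid once $\max_{i\le n}G(Z_i)=o(\sqrt n)$, i.e., under $P_0G^2<\infty$ alone) bypasses the Orlicz machinery entirely. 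Your argument therefore establishes the lemma under the weaker moment assumption $P_0G^2<\infty$.
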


\begin{proof}
By the Cauchy-Schwarz inequality, $\E e^{T(Y_1+Y_2)}<\infty$ if $\E e^{2TY_i}<\infty$, for $i=1,2$.
Thus it suffices to prove that the two terms on the right side of \eqref{EqRepresentation}
both possess finite exponential moments that are bounded in $n$. 

Since $P_0G^2<\infty$, we have that $\e_n:=\max_{1\le i\le n} G(Z_i)/\sqrt n\ra 0$, 
almost surely by Lemma \ref{LemmaMaxima}. The absolute value of the first term of 
\eqref{EqRepresentation} is bounded above by $nV_n(n^{-1/2}Q G+\e_n)$, where
$\e_n$ tends to zero almost surely. Thus this term
has bounded exponential moments by Lemma~\ref{LemmaExponentialMomentsBeta}.

Next consider the second term on the right side of \eqref{EqRepresentation},
or equivalently, assume that $\nu=0$. The absolute value satisfies
$$\sqrt n|P_ng-\PP_ng|
\le \frac1 {\bar W_n}\frac1{\sqrt n}\sumin W_i|g(Z_i)-\PP_ng|
\le 2\sqrt n\max_{1\le i\le n} G(Z_i)=2n\e_n.$$
Since $\E e^X = 1+\int_0^\infty P(X \geq x) e^x dx$, it follows that for $T>0$,
\begin{align*}
&\E_Z\bigl[e^{T\sqrt n(P_ng-\PP_ng)}\bigr]\\
&\quad=1+\int_0^{2T\e_n n}\Pr_Z\Bigl( \frac T{\sqrt n}\sumin (W_i-1)(g(Z_i)-\PP_ng)>x\bar W_n\Bigr)\,e^x\,dx\\
&\quad\le 1+ \int_0^\infty\Pr\bigl(\bar W_n<1-\sqrt{3x/n}\bigr)\,e^x\,dx\\
&\quad\qquad+\int_0^{2T\e_n n}\Pr_Z\Bigl(\frac T{\sqrt n}\sumin (W_i-1)(g(Z_i)-\PP_ng)>x(1-\sqrt{3x/n})\Bigr)
\,e^x\,dx.
\end{align*}
The probability in the first integral on the far right is bounded above by $e^{-3x/2}$, 
for every $x>0$, using \eqref{eq:Gamma_exp}. Thus the integral involving this term
is bounded above by $\int_0^\infty e^{-x/2}\,dx=2$. For $x$ in the integration range 
of the second integral on the far right,
the number $1-\sqrt{3x/n}$ is at least $1/2$ for large enough $n$. Hence the preceding display is
bounded above by 
\begin{align*}
&3+\int_0^\infty \Pr_Z\Bigl(\frac T{\sqrt n} \sumin (W_i-1)(g(Z_i)-\PP_ng)>\frac x2\Bigr)\,e^x\,dx\\
&\qquad\qquad\qquad\qquad=2+ \E_Z \bigl[e^{2T n^{-1/2}\sumin (W_i-1)(g(Z_i)-\PP_ng)}\bigr].
\end{align*}
It suffices to show that the last expectation is finite and bounded in $g\in\G$ for some $T>0$.

Let $\psi_1(x)=e^x-1$, and let $\|\cdot\|_{\psi_1}$ be the corresponding Orlicz norm. Then,
by Proposition~A.1.6 and Lemma~2.2.2 in \cite{vandervaart1996}, with the norms interpreted conditionally
given $Z$,
\begin{align*}&\Bigl\|\frac 1{\sqrt n}\sumin (W_i-1)(g(Z_i)-\PP_ng)\Bigr\|_{\psi_1}\\
&\quad\lesssim \Bigl\|\frac 1{\sqrt n}\sumin (W_i-1)(g(Z_i)-\PP_ng) \Bigr\|_2
+\frac 1{\sqrt n}\Bigl\|\max_{1\le i\le n}|W_i-1| |g(Z_i)-\PP_ng|\Bigr\|_{\psi_1}\\
&\quad\lesssim \sqrt{\PP_n (g-\PP_n g)^2}
+\frac{\log n}{\sqrt n}\|W_1-1\|_{\psi_1}\max_{1\le i\le n} |g(Z_i)|\\
&\quad\lesssim \sqrt{\PP_nG^2}+\frac{\log n}{\sqrt n}\max_{1\le i\le n} G(Z_i).
\end{align*}
Under the condition $P_0G^{2+\d}<\infty$, the first term is bounded almost surely by the law of large numbers, while 
the second term tends to zero almost surely by Lemma \ref{LemmaMaxima}.

By the definition of the Orlicz norm, $\E e^{|Y|/C}\le 2$ for $C\ge \|Y\|_{\psi_1}$ and any random variable
$Y$. This concludes the proof for $T>0$. For $T<0$, we copy the preceding argument, but replace $W_i-1$
by $1-W_i$ and $T$ by $|T|$.
\end{proof}

\begin{lemma}
\label{LemmaMaxima}
If $Y_1,Y_2,\ldots$ are i.i.d.\ random variables with $\E |Y_i|^r<\infty$ for some $r>0$, then
$\max_{1\le i\le n}|Y_i|/n^{1/r}\ra 0$, almost surely.
\end{lemma}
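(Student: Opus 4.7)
The plan is to reduce the maximum to a statement about individual terms. The first step is to prove that $|Y_n|/n^{1/r}\to 0$ almost surely, using the first Borel--Cantelli lemma: for any $\varepsilon>0$, I would write
\begin{align*}
\sum_{n=1}^\infty P\bigl(|Y_n|>\varepsilon n^{1/r}\bigr)
=\sum_{n=1}^\infty P\bigl(|Y_1|^r/\varepsilon^r>n\bigr)
\le \E\bigl[|Y_1|^r\bigr]/\varepsilon^r,
\end{align*}
which is finite by assumption (using the standard bound $\sum_{n\ge 1}P(X>n)\le \E X$ for non-negative $X$). Borel--Cantelli then gives $P(|Y_n|>\varepsilon n^{1/r}\text{ i.o.})=0$, hence $\limsup_n |Y_n|/n^{1/r}\le\varepsilon$ almost surely. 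Taking $\varepsilon$ through a countable sequence tending to zero, I get $|Y_n|/n^{1/r}\to 0$ almost surely.

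The second step upgrades this pointwise statement to convergence of the running maximum. Fix $\varepsilon>0$, and on the full-measure event from step one pick $N=N(\omega)$ such that $|Y_i|\le\varepsilon i^{1/r}$ for all $i\ge N$. Split the maximum as
\begin{align*}
\frac{\max_{1\le i\le n}|Y_i|}{n^{1/r}}
\le \frac{\max_{1\le i<N}|Y_i|}{n^{1/r}} + \frac{\max_{N\le i\le n}|Y_i|}{n^{1/r}}.
\end{align*}
The first summand is a fixed quantity divided by $n^{1/r}$ and so tends to zero as $n\to\infty$. For the second, each term in the maximum is bounded by $\varepsilon i^{1/r}\le \varepsilon n^{1/r}$, so the ratio is at most $\varepsilon$. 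Thus $\limsup_n \max_{1\le i\le n}|Y_i|/n^{1/r}\le\varepsilon$ almost surely, and letting $\varepsilon\downarrow 0$ yields the claim.

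There is no real obstacle here; the only mildly subtle point is recognizing that pointwise decay of $|Y_n|/n^{1/r}$ does not trivially imply decay of the maximum, so one needs the explicit splitting into an initial block (handled by the denominator growing) and a tail block (handled by the pointwise bound $|Y_i|\le\varepsilon i^{1/r}\le\varepsilon n^{1/r}$).
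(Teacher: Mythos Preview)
Your proof is correct, but it follows a different route from the paper's. The paper argues directly with the $r$th powers: for any fixed level $y>0$,
\[
\frac{\max_{1\le i\le n}|Y_i|^r}{n}\le \frac{y^r}{n}+\frac1n\sum_{i=1}^n |Y_i|^r\,1_{|Y_i|>y},
\]
and then lets $n\to\infty$ (the first term vanishes; the second tends to $\E|Y_1|^r 1_{|Y_1|>y}$ by the strong law of large numbers), followed by $y\to\infty$. Your argument instead uses Borel--Cantelli to show $|Y_n|/n^{1/r}\to 0$ almost surely and then upgrades to the running maximum via the initial-block/tail-block split. Both are standard and of comparable length; the paper's version is a one-step truncation that leans on the SLLN, while yours makes the mechanism (eventual smallness of individual terms) more explicit and avoids appealing to the SLLN altogether.
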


\begin{proof}
For any $y>0$ we have 
$$\max_{1\le i\le n} \frac{|Y_i|^r}n\le \frac{y^r}n+\frac1n\sumin |Y_i|^r1_{|Y_i|>y}.$$
As $n\ra\infty$ the first term tends to zero, for fixed $y$, while the second
tends to $\E |Y_1|^r1_{|Y_1|>y}$ by the law of large numbers, and can be made
arbitrarily small by choice of $y$.
\end{proof}

\begin{lemma}
\label{LemmaExponentialMomentsBeta}
Suppose that $Q\sim \DP(\nu)$ and $V_n\sim \Beta(|\nu|,n)$ are independent, and
let $G$ be a nonnegative,  measurable function.
\begin{itemize}
\item[(i)] If $t_n\ra t\in [0,1)$, then $\E e^{nt_n V_n}\ra (1-t)^{-|\nu|}$.
In particular, $\E e^{nt_nV_n}\ra 1$ when $t_n\ra0$.
\item[(ii)] If $\int e^{tG}\,d\nu\le C e^{ct^2}$ for every $t>0$ and some $C, c>0$, then
$\E e^{t\sqrt n V_n QG}\ra1$, for every $t\in [0, 1/\sqrt c)$.
\item[(iii)] If $\E e^{t\sqrt n V_n QG}=O(1)$ for some $t>0$, then 
there exist $C,c>0$ such that $\int e^{tG}\,d\nu\le C e^{ct^2}$, for every $t>0$.
\end{itemize}
\end{lemma}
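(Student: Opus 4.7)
For part (i), the plan is to compute the Beta moment generating function directly via the substitution $u=nv$, which rescales the $V_n$ density to that of $nV_n$ and yields
\begin{equation*}
\E e^{nt_n V_n}=\int_0^n e^{t_n u}\frac{u^{|\nu|-1}(1-u/n)^{n-1}}{n^{|\nu|}B(|\nu|,n)}\,du.
\end{equation*}
I would use Stirling's formula to check $n^{|\nu|}B(|\nu|,n)\to\Gamma(|\nu|)$ and the standard limit $(1-u/n)^{n-1}\to e^{-u}$, so the integrand converges pointwise to $u^{|\nu|-1}e^{(t-1)u}/\Gamma(|\nu|)$. For any $\delta>0$ with $t+2\delta<1$, one has $t_n\le t+\delta$ and $(1-u/n)^{n-1}\le e^{-(1-\delta)u}$ for $n$ large, producing an integrable dominator of the form $Cu^{|\nu|-1}e^{-(1-t-2\delta)u}$. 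Dominated convergence then identifies the limit as $(1-t)^{-|\nu|}$, the moment generating function of $\Gamma(|\nu|,1)$ evaluated at $t$.

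For part (ii), I would combine convergence in probability with uniform integrability. Part~(i) gives $nV_n=O_P(1)$ (in fact converging in distribution to $\Gamma(|\nu|,1)$), hence $\sqrt n V_n=o_P(1)$; together with $QG<\infty$ almost surely this implies $t\sqrt n V_n QG\to 0$ in probability and thus $e^{t\sqrt n V_n QG}\to 1$ in probability. For uniform integrability, fix $\epsilon>0$ with $(1+\epsilon)t<1/\sqrt c$. Since $V_n\perp Q$, conditioning on $V_n$ and applying Jensen's inequality in $Q$ gives
\begin{equation*}
\E\bigl[e^{(1+\epsilon)t\sqrt n V_n QG}\mid V_n\bigr]\le \frac{1}{|\nu|}\int e^{(1+\epsilon)t\sqrt n V_n G}\,d\nu\le \frac{C}{|\nu|}e^{c(1+\epsilon)^2 t^2 n V_n^2}.
\end{equation*}
Using $V_n^2\le V_n$ and applying part~(i) with limit parameter $c(1+\epsilon)^2 t^2<1$ shows $\E e^{(1+\epsilon)t\sqrt n V_n QG}$ is uniformly bounded in $n$, so the family $\{e^{t\sqrt n V_n QG}\}_n$ is uniformly integrable and $\E e^{t\sqrt n V_n QG}\to 1$ as required.

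For part (iii), the plan is to exploit the stick-breaking representation of the Dirichlet process to transfer the hypothesis into a sub-Gaussian bound on $\nu$. Writing $Q=V_1\delta_{\theta_1}+(1-V_1)Q'$ with $V_1\sim\Beta(1,|\nu|)$, $\theta_1\sim\nu/|\nu|$, $Q'\sim\DP(\nu)$ all independent (and independent of $V_n$), nonnegativity of $G$ gives $QG\ge V_1 G(\theta_1)$. Restricting to $\{V_1\ge 1/2\}$ (of probability $2^{-|\nu|}$) and using its independence from $(V_n,\theta_1)$ yields $\E e^{(t/2)\sqrt n V_n G(\theta_1)}$ bounded uniformly in $n$, say by $M'$. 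For each $y>0$, a further restriction to $\{\sqrt n V_n\ge 2y/t\}$ together with the independence of $V_n$ and $G(\theta_1)$ then gives
\begin{equation*}
\E e^{yG(\theta_1)}\cdot P\bigl(\sqrt n V_n\ge 2y/t\bigr)\le M'.
\end{equation*}
The hard part, and the main obstacle, is lower bounding $P(V_n\ge 2y/(t\sqrt n))$ by a sub-Gaussian quantity in $y$. Choosing $n=\lceil Ay^2/t^2\rceil$ for a large enough constant $A$ makes the threshold $u_n=2/(t\sqrt A)$ a small constant; integrating the Beta density lower bound over $[u_n,2u_n]$, using $B(|\nu|,n)^{-1}\asymp n^{|\nu|}/\Gamma(|\nu|)$ together with $(1-u_n)^n\gtrsim e^{-c' n u_n}$, yields $P(V_n\ge u_n)\gtrsim y^{2|\nu|}e^{-cy^2}$. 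Substituting then gives $\E e^{yG(\theta_1)}\le Ce^{cy^2}$ for $y$ large (after absorbing the polynomial factor into the constants) and trivially for small $y$, so the identity $\int e^{yG}\,d\nu=|\nu|\E e^{yG(\theta_1)}$ delivers the claim.
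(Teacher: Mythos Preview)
Your proof is correct. Parts (i) and (iii) follow essentially the same path as the paper's proof. For (i), both use the substitution $u=nv$ and dominated convergence; the paper avoids Stirling by writing $\E e^{nt_nV_n}$ as a ratio of the same integral at $t=t_n$ and at $t=0$, but this is cosmetic. For (iii), both use the stick-breaking lower bound $QG\ge V_1 G(\theta_1)$ with the restriction to $\{V_1\ge 1/2\}$; the paper then handles the $V_n$ contribution by restricting the $u$-integral in the density representation to $[n/2,n]$, which yields the crude lower bound $n^{-1}2^{-n}$ directly and hence $\psi(t\sqrt n/4)\le C(\nu)e^n$, whereas your probabilistic restriction to $\{\sqrt n V_n\ge 2y/t\}$ with $n\asymp y^2$ chosen ex post is equivalent in spirit but requires the extra Beta tail computation you outline.

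Part (ii) is where you take a genuinely different route. The paper stays with the integral representation from (i): after the same Jensen step you use, giving $\E e^{(tu/\sqrt n)QG}\lesssim e^{ct^2u^2/n}$, it applies dominated convergence directly to $\int_0^n \E e^{(tu/\sqrt n)QG}\,u^{|\nu|-1}(1-u/n)^{n-1}\,du$, with dominator $u^{|\nu|-1}e^{-du}$ for some $d<1-ct^2$ (using $u^2/n\le u$ on $[0,n]$). Your approach instead establishes $\sqrt n V_n QG\to 0$ in probability and upgrades via uniform integrability, bounding the $(1+\epsilon)$-moment through $V_n^2\le V_n$ and part (i). Both are valid; the paper's argument is slightly shorter and self-contained within the integral calculus already set up, while yours makes the underlying probabilistic structure (tightness of $nV_n$ plus moment control) more explicit and would adapt more readily if only distributional information on $V_n$ were available rather than its exact density.
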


\begin{proof}
We have that 
$$\int_0^1 e^{ntv}v^{|\nu|-1}(1-v)^{n-1}\,dv=n^{-|\nu|}\int_0^n e^{tu}u^{|\nu|-1}(1-u/n)^{n-1}\,du.$$
The integrand is dominated by $e^{tu}u^{|\nu|-1}e^{-u(1-1/n)}$, which is uniformly integrable for sufficiently large $n$ and $t<1$.
Therefore, for fixed $t<1$, the integral times $n^{|\nu|}$ is asymptotic to
$\int_0^\infty u^{|\nu|-1}e^{-u(1-t)}\,du=(1-t)^{-|\nu|}\Gamma(|\nu|)$ by the dominated convergence
theorem. By the definition of the beta distribution, the expectation $\E e^{nt_n V_n}$ is the
quotient of two of the integrals as in the display, with $t=t_n$ and with $t=0$, respectively.
This concludes the proof of (i).

For (ii), we first note that $\E e^{tQ G}\le \E Q e^{tG}=\int e^{tG}\,d\nu/|\nu|$ by Jensen's inequality,
whence $\E e^{t QG}\lesssim e^{ct^2}$ by the assumption.
By the independence of $Q$ and $V_n$ and a coordinate substitution as under (i),
$$\E e^{t\sqrt n V_n QG}
= \frac{ \int_0^n \E e^{(tu/\sqrt n)QG}  u^{|\nu|-1}(1-u/n)^{n-1}\,du}{\int_0^n u^{|\nu|-1}(1-u/n)^{n-1}\,du}.$$
The integrand in the numerator tends pointwise to $u^{|\nu|-1}e^{-u}$ and is dominated by a multiple of
$e^{ct^2u^2/n}u^{|\nu|-1}e^{-u(1-1/n)}\le u^{|\nu|-1}e^{-du}$ on $[0,n]$, for a constant
$d<1-ct^2$ and sufficiently large $n$. The denominator is as before. The integrals thus have the same limit.

For (iii), note that by the stick-breaking representation of the Dirichlet process, the
variable $QG$ is stochastically larger than $W G(\theta)$, 
for $W\sim \Beta(1,|\nu|)$ independent of $\theta\sim \nu/|\nu|$.
It follows that for any $t \geq 0$,
$$\E e^{t QG}\ge \E e^{tWG(\theta)} = \int_0^1\int e^{t wG}\,d\nu\, (1-w)^{|\nu|-1}\,dw\geq \psi(t/2) \frac{1}{|\nu|2^{|\nu|}}$$
for $\psi(t)=\int e^{tG}\,d\nu$. Then by the preceding calculations, for sufficiently large $n$,
\begin{align*}
\E e^{t\sqrt n V_n QG}&\geq \frac{1}{|\nu|2^{|\nu|+1}\Gamma(|\nu|)} \int_0^n\psi\bigl(tu/(2\sqrt n)\bigr)u^{|\nu|-1}(1-u/n)^{n-1}\,du\\
&\ge \frac{\psi(t\sqrt n/4)}{|\nu|2^{|\nu|+1}\Gamma(|\nu|)} \int_{n/2}^n u^{|\nu|-1}(1-u/n)^{n-1}\,du.
\end{align*}
The integral is bounded below by $n^{-1}2^{-n}$, whence $\psi(t\sqrt n/4)\leq C(\nu) e^{n}$ for 
large enough $n$,
if the left side of the display remains bounded in $n$. For sufficiently large $s$, there exists large enough $n$ such that 
$t\sqrt {n-1}/4<s\le t\sqrt n/4$, and $\psi(s)\le C(\nu) e^{n}\le C(\nu) e^{32s^2/t^2}$.
\end{proof}

\subsection{Proof of Proposition \ref{BV_prop}}

\begin{proof}[Proof of Proposition \ref{BV_prop}]
Because by assumption the variation of a function $g\in\G$ is bounded uniformly over all intervals $[a,b]$, the limits of $g(x)$ as
$x\rightarrow\pm\infty$ exist and are finite. (Indeed the values $|g(a)|$, for $a<0$, are bounded by $|g(0)|+V_a^0(g)\le |g(0)|+V$ and hence
every sequence $g(x_n)$ with $x_n\rightarrow-\infty$ has a converging subsequence. If there were two subsequences $x_n$ and $y_n$
with different limits, then these could without loss of generality be chosen alternating: $x_1\ge y_1\ge x_2\ge y_2\ge\cdots$
and the variation over the partitions containing $y_N,x_N, \ldots y_1,x_1$ would tend to infinity with $N$.) It can be seen that 
the variation of the extended function $g$ over $[-\infty,\infty]$ is the supremum of the variations over all intervals $[a,b]$, and hence is also finite.
In particular, the functions $g-g(-\infty)$ are uniformly bounded.  As shifting the functions by a constant does not change the claim of the proposition,
we can assume without loss of generality that $g(-\infty)=0$, and that the class $\G$ has a uniformly bounded envelope function $G$. 
We can then decompose $g$ as $g=g^+-g^-$, for right-continuous, nondecreasing functions $g^+,g^-: [-\infty,\infty]\to \RR$,
uniformly bounded by $2V$
(e.g.\  Section 6.3 of \cite{royden2010}). Let $dg=dg^+-dg^-$ be the corresponding signed (Stieltjes) measure, and
$|dg|=dg^++dg^-$ its total variation.

We work on the probability space from Theorem \ref{thm:str_approx_BB}. 
For $(B_n)$ the Brownian bridges in that theorem and $g\in \G$, set 
$$\W_n g=-\int B_n\circ \F_n\,dg.$$
It can be seen that given $Z_1,Z_2,\ldots$, the variable $\W_ng$ possesses a $N(0,\|g-\P_ng\|_{L^2(\P_n)}^2)$-distribution,
whence $\W_n$ is a $\P_n$-Brownian bridge process, indexed by $\G$.

The process $F=F_n$ in Theorem \ref{thm:str_approx_BB} is the distribution function of the posterior
Dirichlet process $P_n$. By partial integration, we have
$$(P_n-P)g=\int g\,d(F_n-\F_n)=-\int (F_n-\F_n)\,dg.$$
Writing $\Delta_n = \|\sqrt{n}(F_n-\F_n)-B_n\circ\F_n\|_\infty$, we thus find that, for every sequence $Z_1,Z_2,\dots$,
$$|\sqrt{n}(P_n - \P_n)g - \W_n(g)| =\Bigl| \int \bigl(\sqrt n(F_n-\F_n)-B_n\circ\F_n\bigr)\,dg\Bigr| \le 4V\Delta_n.$$
Since bounded variation balls are uniform Donsker classes, $\G$ is $P_0$-Glivenko-Cantelli. 
Thus to prove the proposition, by Lemmas \ref{LemmaUniformWeakConvergence} and \ref{LemmaExpMoments},
 we need show only that \eqref{EqExponentialMoment} holds for all $T\in \R$. Using the last display and Cauchy-Schwarz,
\begin{align*}
\sup_{g\in\G} \E_Z\bigl[e^{T\sqrt{n}(P_n-\PP_n)g} \bigr] & \leq \left( \E_Z\bigl[e^{8TV\Delta_n} \bigr] \right)^{1/2} 
\times \sup_{g\in \G}  \left( \E_Z\bigl[e^{2T\W_n(g)} \bigr] \right)^{1/2}.
\end{align*}
The first term converges to 1 as $n\to \infty$ for every sequence $Z_1,Z_2,\dots$ by Lemma \ref{lem:exp_sup} below. 
The second term equals $\sup_{g\in \G} e^{T^2 \P_n(g-\P_ng)^2} \le  e^{T^2 \P_nG^2}\to e^{T^2 P_0G^2} <\infty$, $P_0^\infty$-a.s.
This establishes \eqref{EqExponentialMoment} and completes the proof.
\end{proof}



\subsection{Proofs of strong approximation results}

We recall some useful facts. For a centered Gaussian process $(G_t)_{t\in T}$ with countable index set $T$ satisfying $\sup_{t\in T}|G_t| < \infty$ (Borell's inequality - Theorem 7.1 of \cite{ledoux2001}):
\begin{align}\label{eq:Borell}
P\left(\sup_{t\in T} |G_t| \geq \E\sup_{t\in T}|G_t| + x\right) \leq e^{-\frac{x^2}{2\sigma^2}},
\end{align}
for every $x>0$, where $\sigma^2 = \sup_{t\in T} \E G_t^2<\infty$. Note that if $G$ has continuous sample paths and $T \subset \R$ is uncountable, \eqref{eq:Borell} still holds, since we may restrict the supremum to a countable skeleton of $T$.

For $X_\theta \sim \text{Gamma}(\theta,1)$, we have the pair of exponential inequalities 
\begin{align}\label{eq:Gamma_exp}
P(X_\theta > \theta + \sqrt{2\theta x} + x) \leq e^{-x}, \quad \quad P(X_\theta < \theta - \sqrt{2\theta x}) \leq e^{-x},
\end{align}
for every $x>0$, see p. 28-29 of \cite{boucheron2013}. We also denote by $P_{Z}$ the conditional probability given $Z_1,\dots,Z_n$.

\begin{proof}[Proof of Theorem \ref{thm:str_approx_BB}]
Recall that $F|Z_1,\dots,Z_n = P_n1_{(-\infty,\cdot]}$ and let $\bar{F}|Z_1,\dots,Z_n = \bar{P}_n1_{(-\infty,\cdot]}$ for $\bar{P}_n \sim \DP(n\P_n)$. Using the representation \eqref{EqDP}, conditionally on $Z_1,\dots,Z_n$,
$$\|\sqrt{n}(F-\F_n)-\sqrt{n}(\bar{F}-\F_n)\|_\infty = \sup_{t\in \R} \sqrt{n} \left| (P_n-\bar{P}_n)1_{(-\infty,t]} \right| \leq \sqrt{n}V_n,$$
where $V_n \sim \Beta(|\nu|,n)$ is independent of $\bar{F}$. The random variable $V_n$ is equal in distribution to $X/(X+Y_n)$, where $X \sim \text{Gamma}(|\nu|,1)$ and $Y_n \sim \text{Gamma}(n,1)$ are independent. Applying \eqref{eq:Gamma_exp} gives $P(Y_n < Cn) \leq e^{-x}$ for $C = 1-\sqrt{2/3} >1/6$ and any $0<x\leq n/3$, and then that $P(X/Y_n > 6n^{-1}(|\nu| + \sqrt{2|\nu|x} + x)) \leq 2e^{-x}$ for all $0<x\leq n/3$. For $x \geq n/3$, we have the trivial probability bound
$$P(X/(X+Y_n) \geq 6n^{-1}(|\nu| + \sqrt{2|\nu|x} + x)) \leq P( X/(X+Y_n) \geq 2) = 0.$$
Combining the above and using $2\sqrt{|\nu|x} \leq |\nu|+x$, 
\begin{align}\label{eq:DP to BB ineq}
P_{Z}(\|\sqrt{n}(F-\F_n) - \sqrt{n}(\bar{F}-\F_n) \|_\infty \geq 12n^{-1/2}(|\nu| + x)) \leq 2e^{-x},
\end{align}
for all $x > 0$. It therefore remains to show the desired exponential inequality with $\sqrt{n}(\bar{F}-\F_n)$ instead of $\sqrt{n}(F-\F_n)$.

Let $U_1,\dots,U_{n-1} \sim U(0,1)$ be i.i.d. and independent of $(Z_i)_{i\geq1}$ and denote the corresponding order statistics by $0=U_{(0)}<U_{(1)} < \dots < U_{(n-1)} < U_{(n)}=1$. For given $Z_1,\ldots,Z_n$, the Bayesian bootstrap posterior distribution can be represented in law as $\bar{P}_n = \sum_{i=1}^n (U_{(i)}-U_{(i-1)})\delta_{Z_{(i)}}$, where $Z_{(1)} \leq \dots \leq Z_{(n)}$ are the order statistics of the sample and we have used the exchangeability of $(U_{(i)}-U_{(i-1)}:1\leq i \leq n-1)$. This gives
\begin{align}\label{eq:BB_draw}
\bar{F}(z) = \sum_{i=1}^n (U_{(i)}-U_{(i-1)})1_{\{ Z_{(i)} \leq z\} }.
\end{align}
Define the empirical quantile function $Q_{n-1}(t)$ of the $U_i$'s by
\begin{align*}
Q_{n-1}(t) = U_{(i)} \quad \quad \text{if} \quad  \frac{i-1}{n-1} < t \leq \frac{i}{n-1},\quad i=1,2,\dots,n-1
\end{align*}
and set $q_{n-1}(t) = \sqrt{n-1}(Q_{n-1}(t)-t)$ to be the uniform quantile process. By Theorem 1 of Cs\"org\H{o} and R\'ev\'esz \cite{csorgo1978}, one can define for each $n$ a Brownian bridge $\{\tilde{B}_n(t):0\leq t \leq 1\}$ on the same probability space such that for all $x \geq 0$,
\begin{align}\label{eq:quantile_str_approx}
P \left( \sup_{0\leq t\leq 1} \left| q_n(t)-\tilde{B}_n(t) \right| \geq \frac{c_1 \log n + x}{\sqrt{n}}\right) \leq c_2e^{-c_3x},
\end{align}
where $c_1,c_2,c_3$ are universal constants. Since these Brownian bridges are constructed based on $(U_i)_{i\geq 1}$, which are independent of $(Z_i)_{i\geq1}$, they may also be taken to be independent of $(Z_i)_{i\geq1}$. Setting $B_n = \tilde{B}_{n-1}$ and following \cite{lo1987},
\begin{equation}\label{eq:Lo argument}
\begin{split}
&\|\sqrt{n}(\bar{F}-\F_n)-B_n(\F_n)\|_\infty = \max_{1 \leq i <n} \left| \sqrt{n} \left( U_{(i)} - \frac{i}{n} \right) - B_n(i/n) \right| \\
& \qquad\leq \max_{1\leq i <n} \left| \sqrt{n} \left( U_{(i)} - \frac{i}{n-1} \right) - B_n(i/n) \right| + n^{-1/2} \\
& \qquad\leq \sqrt{\frac{n}{n-1}} \max_{1\leq i <n} \left| \sqrt{n-1} \left( U_{(i)} - \frac{i}{n-1} \right) - B_n(i/(n-1)) \right| \\
& \qquad + \sqrt{\frac{n}{n-1}}\max_{1\leq i <n} \left| B_n(i/(n-1))  - B_n(i/n) \right| \\
& \qquad +  \left( \sqrt{\frac{n}{n-1}} - 1 \right) \max_{1\leq i <n} \left| B_n(i/n) \right| +  n^{-1/2} \\
& \qquad =: I_B + II_B + III_B + n^{-1/2}.
\end{split}
\end{equation}
We prove separate exponential inequalities for $I_B-III_B$. For $n\geq 2$,
\begin{align*}
I_B &\leq \sqrt{2} \max_{1 \leq i <n} \left| q_{n-1}(i/(n-1)) - \tilde{B}_{n-1}(i/(n-1)) \right| \\
&\leq \sqrt{2} \sup_{0\leq t\leq 1} \left| q_{n-1}(t)-\tilde{B}_{n-1}(t) \right|,
\end{align*}
and the required inequality follows from \eqref{eq:quantile_str_approx}.

Setting $V_i = V_i^{(n)} = B_n(i/(n-1)) - B_n(i/n)$, we have $II_B \leq \sqrt{2} \max_{1 \leq i <n} |V_i|$ for $n\geq 2$. Since $B_n$ is a Brownian bridge, $V_1,\dots,V_{n-1}$ are Gaussian random variables with $V_i \sim N(0, \tfrac{i}{n(n-1)}(1-\tfrac{i}{n(n-1)}))$. Thus $\var(V_i) \leq 1/n$ for all $i$, and so the standard Gaussian maximal inequality Lemma 2.3.4 of \cite{gine2016} yields $\E\max_{1\leq i <n} |V_i| \leq C\sqrt{\log n/n}$ for an absolute constant $C>0$. Applying Borell's inequality \eqref{eq:Borell}, for $x>0$,
\begin{align}\label{eq:II_B}
P \left( II_B \geq \frac{C\sqrt{\log n}+x}{\sqrt{n}} \right) \leq e^{-cx^2}.
\end{align}

For $III_B$, recall that for a Brownian bridge $B_n$, $P(\|B_n\|_\infty > x) \leq 2e^{-2x^2}$ for $x>0$ (Proposition 12.3.4 of \cite{Dudley2002}). Using the mean value theorem with $h(x) = (1-x)^{1/2}$, for some $\xi\in[0,1/n]$,
$$\sqrt{\frac{n}{n-1}}-1 =  \sqrt{\frac{n}{n-1}} \left( 1 - \sqrt{1-1/n}\right) = \sqrt{\frac{n}{n-1}} \frac{1}{2n\sqrt{1-\xi}} \leq \frac{1}{n}.$$
Therefore, $P(III_B \geq n^{-1}x)\leq 2e^{-2x^2}$ for $x>0$.

Combining the exponential inequalities for $I_B-III_B$ via a union bound and comparing the dominating terms,
\begin{align*}
P_{Z} \left( \|\sqrt{n}(\bar{F}-\F_n) - B_n(\F_n) \|_\infty \geq \frac{C_1\log n + x}{\sqrt{n}}  \right) \leq C_2 e^{-C_3x},
\end{align*}
for all $x>0$ and universal constants $C_1,C_2,C_3 >0$. Together with \eqref{eq:DP to BB ineq} this gives the result.
\end{proof}

\begin{proof}[Proof of Theorem \ref{thm:str_approx_KP}]
Using the exponential inequality \eqref{eq:DP to BB ineq}, we need show only the result with $\sqrt{n}(\bar{F}-\F_n)$ instead of $\sqrt{n}(F-\F_n)$, where $\bar{F}$ is defined in \eqref{eq:BB_draw}. Let $H_n(s) = \tfrac{1}{n}\sum_{i=1}^n 1_{\{U_i \leq s\}}$ be the empirical distribution function of the i.i.d. random variables $U_1,U_2,\dots \sim U(0,1)$ in \eqref{eq:BB_draw}. With $\alpha_n(s) = \sqrt{n}(H_n(s) -s)$, $s\in[0,1]$, the uniform empirical process, the KMT inequality (Theorem 4 of \cite{KMT}) implies that there exists a Kiefer process $\widetilde{K}(s,t)$ with
\begin{align}\label{eq:KMT}
P \left( \sup_{0\leq s \leq 1} \left| \alpha_n(s) - n^{-1/2} \widetilde{K}(s,n) \right| \geq \frac{C(\log n)^2 + x \log n}{\sqrt{n}} \right) \leq D e^{-cx}
\end{align}
for all $n\geq 1$, $x>0$ and universal constants $C,c,D>0$. We take as Kiefer process $K(s,t) =-\widetilde{K}(s,t)$.

Arguing as in \eqref{eq:Lo argument},
\begin{align*}
& \|\sqrt{n}(\bar{F}-\F_n)-n^{-1/2}K(\F_n,n)\|_\infty \\ 
& \leq \sqrt{\frac{n}{n-1}} \max_{1\leq i <n} \left| \sqrt{n-1} \left( U_{(i)} - \frac{i}{n-1} \right) - (n-1)^{-1/2} K\left(\frac{i}{n-1},n-1\right) \right| \\
& \qquad\qquad + \max_{1\leq i <n} \left| \frac{\sqrt{n}}{n-1} K\left(\frac{i}{n-1},n-1 \right)  - n^{-1/2}K(i/n,n) \right| + n^{-1/2} \\
& =: I_K + II_K + n^{-1/2}.
\end{align*}
We again establish separate exponential inequalities for $I_K$ and $II_K$. For $n \geq 2$,
\begin{align*}
I_K & \leq \sqrt{2} \max_{1\leq i <n} \left| -\alpha_{n-1}(U_{(i)}) - (n-1)^{-1/2} K(U_{(i)},n-1) \right| \\
& \qquad\qquad + \sqrt{\frac{2}{n-1}} \max_{1\leq i <n} \left|  K(U_{(i)},n-1)-K\left(\frac{i}{n-1},n-1 \right) \right| \\
& \leq \sqrt{2} \sup_{0\leq s \leq 1} |\alpha_{n-1}(s) - (n-1)^{-1/2}\widetilde{K}(s,n-1)| \\
& \qquad\qquad + \sqrt{2}(n-1)^{-1/2} \sup_{0\leq s \leq 1} |K(s,n-1)-K(H_{n-1}(s),n-1)|.
\end{align*}
For the first term, we use the KMT inequality \eqref{eq:KMT}. Since $\{(n-1)^{-1/2}K(s,n-1):s\in[0,1]\}$ is a Brownian bridge for each $n\geq 2$, we use the first inequality in Lemma \ref{lem:BBridge_sup} to deal with the second term. Together these yield
\begin{align*}
P_{Z} \left( I_K \geq \frac{C_1 (\log n)^2 + x\log n}{n^{1/2}} + \frac{C_2\sqrt{\log n} x^{3/4}}{n^{1/4}} \right) \leq C_3e^{-C_4x}
\end{align*}
for all $x>0$ and universal constants $C_1-C_4>0$.

For $n\geq 2$,
\begin{align*}
II_K & \leq  \frac{\sqrt{n}}{n-1} \max_{1\leq i <n} \left| K\left(\frac{i}{n-1},n-1 \right)  - K\left(\frac{i}{n},n-1\right) \right|  \\
& \qquad +  \frac{\sqrt{n}}{n-1} \max_{1\leq i <n} \left| K\left(i/n,n-1 \right)  - K(i/n,n) \right|  \\
&\qquad + \left(  \frac{\sqrt{n}}{n-1} - \frac{1}{\sqrt{n}}\right) \max_{1\leq i <n} \left| K(i/n,n) \right|\\
& = II_K^{(1)} + II_K^{(2)} + II_K^{(3)}.
\end{align*}
Using again that $\{(n-1)^{-1/2}K(s,n-1):s\in[0,1]\}$ is a Brownian bridge, $II_K^{(1)}$ is equal in distribution to $II_B$, for which we use the inequality \eqref{eq:II_B}. Similarly, $II_K^{(3)} = (n-1)^{-1} \max_{1\leq i <n} | n^{-1/2}K(i/n,n)|$, which is equal in distribution to $III_B$ up to a universal constant factor, and hence satisfies $P(II_K^{(3)} \geq Cn^{-1}x) \leq 2e^{-2x^2}$ for all $x>0$. For $n\geq 2$, we have $II_K^{(2)} \leq Cn^{-1/2}\max_{1\leq i < n}|X_i|$, where $X_i = K(i/n,n-1)-K(i/n,n)\sim N(0,\tfrac{i}{n}(1-\tfrac{i}{n}))$ satisfies $\var(X_i) \leq 1/4$. By Lemma 2.3.4 of \cite{gine2016}, $\E\max_{1\leq i < n}|X_i| \leq C \sqrt{\log n}$ and so by Borell's inequality \eqref{eq:Borell}, $P(II_K^{(2)} \geq n^{-1/2} (C\sqrt{\log n} +x) ) \leq e^{-2x^2}$ for a universal constant $C>0$ and all $x>0$. Together these give for all $x>0$,
\begin{align*}
P \left( II_K \geq \frac{C_1\sqrt{\log n} + \sqrt{x}}{n^{1/2}} \right) \leq C_2 e^{-C_3x}.
\end{align*}
Using the exponential inequalities for $I_K$ and $II_K$, a union bound and that $x^{1/2} \lesssim \log n + x$,
\begin{align*}
&P_{Z} \Bigl( \|\sqrt{n}(\bar{F}-\F_n) - n^{-1/2}K(\F_n,n)\|_\infty \geq \frac{C_1(\log n)^2}{n^{1/2}} \\
&\qquad\qquad\qquad\qquad\qquad+\frac{x\log n}{n^{1/2}} + \frac{C_2\sqrt{\log n}x^{3/4}}{n^{1/4}} \Bigr) \leq C_3e^{-C_4x},
\end{align*}
for all $x>0$ and universal constants $C_1-C_4>0$. The first term on the right-hand side dominates if and only if $x \leq D (\log n)^2/n^{1/3}$ for a universal constant $D>0$. For such $x$, the upper bound in the last display is bounded by $C_3 \exp(-C_4 (\log 2)^2/2^{1/3})$ for all $n \geq 2$, which can be made larger than 1 by taking $C_3$ universal and large enough. The last display is thus trivially satisfied for such $x$, which implies
\begin{align*}
P \left( \|\sqrt{n}(\bar{F}-\F_n) - n^{-1/2}K(\F_n,n)\|_\infty \geq \frac{x\log n}{n^{1/2}} + \frac{C_2\sqrt{\log n}x^{3/4}}{n^{1/4}} \right) \leq C_3e^{-C_4x},
\end{align*}
for all $x>0$ and (different) universal constants $C_2-C_4>0$. Together with \eqref{eq:DP to BB ineq} this yields the result.
\end{proof}

\begin{proof}[Proof of Corollary \ref{cor:trueF0_BB}]
That $P(A_{n,y}) \geq 1 - 2e^{-2y^2}$ follows from the Dvoretzky-Kiefer-Wolfowitz-Massart inequality. Let $\{B_n:n\geq 1\}$ be the Brownian bridges from Theorem \ref{thm:str_approx_BB}. By the triangle inequality,
\begin{align*}
\left|  \sqrt{n}(F-\F_n)(z)-  B_n(F_0(z)) \right| &\leq \left| \sqrt{n}(F-\F_n)(z) -  B_n(\F_n(z)) \right|\\
&\qquad\qquad +  \left| B_n(\F_n(z)) -B_n(F_0(z)) \right|,
\end{align*}
and the exponential inequality for the first term follows from Theorem \ref{thm:str_approx_BB}. Since $\{ B_n:n\geq 1\}$ are independent of $(Z_i)_{i \geq 1}$ by Theorem \ref{thm:str_approx_BB}, applying the second inequality in Lemma \ref{lem:BBridge_sup} gives
$$P \left( \left. \sup_{z\in\R} \left| B_{\F_n(z)} - B_{F_0(z)} \right| 1
\geq K \frac{\sqrt{y}\left(\sqrt{\log n}+\sqrt x\right)}{n^{1/4}} \right| Z_1,\dots,Z_n \right) 1_{A_{n,y}} \leq e^{-x},$$
for all $x>0$ and a universal constant $K>0$. The result follows by a union bound.
\end{proof}

\begin{lemma}\label{lem:BBridge_sup}
Let $B = \{B_t : t\in[0,1]\}$ be a Brownian bridge and $\F_n$ be the empirical distribution function of $Z_1,\dots,Z_n\sim^{iid} F_0$. 
Then there exists a universal constant $K>0$ such that, for $n\geq 2$ and every $x>0$,
\begin{align*}
P \left( \sup_{z\in\R} |B_{\F_n(z)} - B_{F_0(z)}| \geq K\frac{\sqrt{\log n}}{n^{1/4}}\, x^{3/4} \right) \leq 2e^{-x}.
\end{align*}
If $B$ is independent of $Z_1,\dots,Z_n$, then there also exists $K>0$ such that, for $n\geq 2$ and every $x>0$,
\begin{align*}
P \left( \left. \sup_{z\in\R} \left| B_{\F_n(z)} - B_{F_0(z)} \right| 
\geq K\|\F_n-F_0\|_\infty^{1/2} \left(\sqrt{\log n}+\sqrt x\right) \right| Z_1,\dots,Z_n \right) \leq e^{-x}.
\end{align*}
\end{lemma}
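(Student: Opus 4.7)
The plan is to bound $\sup_{z\in\R}|B_{\F_n(z)}-B_{F_0(z)}|$ via the modulus of continuity of the Brownian bridge,
$$\omega_B(\d) := \sup_{s,t\in[0,1],\, |s-t|\le \d}|B_s-B_t|,$$
evaluated at $\d=\|\F_n-F_0\|_\infty$. Since $\F_n(z),F_0(z)\in[0,1]$ and $|\F_n(z)-F_0(z)|\le \d$ for every $z$, the pointwise bound $\sup_z|B_{\F_n(z)}-B_{F_0(z)}|\le \omega_B(\d)$ holds for each realisation of $B$. The core auxiliary estimate I need is: for an absolute constant $C>0$, every $\d\in(0,1]$ and $u>0$,
$$P\bigl(\omega_B(\d) \ge C\sqrt{\d\log(e/\d)} + \sqrt{2\d u}\bigr)\le e^{-u},$$
which follows from Borell's inequality \eqref{eq:Borell} applied to the centred Gaussian process $(B_s-B_t)_{|s-t|\le \d}$ (whose supremum variance is bounded by $\d$) combined with the Dudley entropy bound $\E\omega_B(\d)\lesssim \sqrt{\d\log(e/\d)}$, obtained by chaining against the intrinsic pseudometric $d(s,t)=\sqrt{|s-t|(1-|s-t|)}\le \sqrt{|s-t|}$.

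For the second (conditional) inequality I would condition on $Z_1,\dots,Z_n$, so that $\d$ is deterministic, and apply the auxiliary estimate at scale $\d$ with $u=x$; using $\log(e/\d)\lesssim \log n$ in the relevant regime yields the claimed $K\sqrt{\d}(\sqrt{\log n}+\sqrt{x})$. For the first (unconditional) inequality I would exploit monotonicity of $\omega_B(\cdot)$ together with the DKW--Massart bound $P(\|\F_n-F_0\|_\infty>\sqrt{x/(2n)})\le 2e^{-x}$, giving
$$P\bigl(\omega_B(\d)>Y\bigr)\le P\bigl(\|\F_n-F_0\|_\infty>\sqrt{x/(2n)}\bigr) + P\bigl(\omega_B(\sqrt{x/(2n)})>Y\bigr),$$
and then applying the auxiliary estimate at the deterministic scale $\d'=\sqrt{x/(2n)}$ with $u=x$. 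The calculations $\sqrt{\d'\log(e/\d')}\lesssim (x/n)^{1/4}\sqrt{\log n}$ for $x\le n$ and $\sqrt{2\d' x}\asymp x^{3/4}/n^{1/4}$ show that both contributions are dominated by $K\sqrt{\log n}\,x^{3/4}/n^{1/4}$ once $x\ge 1$; for $x<1$ the bound $2e^{-x}\ge 1$ is vacuous.

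The main technical step is the quantitative chaining estimate on $\E\omega_B(\d)$ with an absolute constant. The subsequent assembly via Borell's inequality, monotonicity of $\omega_B$, and DKW is routine bookkeeping to match the advertised form. A minor wrinkle in Part~2 is the case of exponentially small $\d$, where $\log(e/\d)$ may exceed $\log n$; this is harmless since there $\omega_B(\d)$ is itself exponentially small and the stated bound survives with an adjusted constant.
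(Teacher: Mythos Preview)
Your approach is essentially the paper's: Dudley's entropy bound plus Borell's inequality for the Brownian bridge modulus, combined with DKW--Massart for the empirical deviation. For the first inequality the paper packages things slightly differently, applying Borell once to the \emph{normalized} process $(B_s-B_t)/J(|s-t|)$ with $J(\delta)=\sqrt{\delta}\max(\sqrt{\log(1/\delta)},1)$ and then intersecting with DKW; you instead fix a deterministic scale $\delta'=\sqrt{x/(2n)}$, split via monotonicity of $\omega_B$, and apply Borell at that scale. Both routes are correct and lead to the same bound after the bookkeeping you indicate.

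There is one genuine gap, in the second (conditional) inequality. Your dismissal of the case $\log(e/\delta)>\log n$ as ``harmless since $\omega_B(\delta)$ is itself exponentially small'' does not work: for, say, $\delta=e^{-n}$ one has $\E\,\omega_B(\delta)\asymp\sqrt{n}\,e^{-n/2}$, which strictly exceeds the target mean term $K\sqrt{\delta\log n}=K\sqrt{\log n}\,e^{-n/2}$ for every fixed $K$, so no universal constant can absorb the discrepancy. What is actually needed is a lower bound on $\delta=\|\F_n-F_0\|_\infty$ in terms of $n$. The paper supplies this by invoking Mogul'ski\u{\i}'s law, $\liminf_n\sqrt{2n\log\log n}\,\|\F_n-F_0\|_\infty=\pi/2>0$ a.s., to conclude $\log(1/\delta)\lesssim\log n$ almost surely. (If $F_0$ is continuous, a simpler deterministic observation would also do: since $\F_n$ jumps by $1/n$ at each order statistic while $F_0$ is continuous there, one of $|\F_n(Z_{(i)})-F_0(Z_{(i)})|$ and $|\F_n(Z_{(i)}^-)-F_0(Z_{(i)})|$ is at least $1/(2n)$, so $\delta\ge 1/(2n)$ and $\log(e/\delta)\le\log(2en)\lesssim\log n$ outright.)
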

 
\begin{proof}
The intrinsic metric of the Brownian bridge is bounded above by the square root of the Euclidean distance, whence its
metric entropy integral is a multiple of $\delta\mapsto \delta \max\bigl(\sqrt{\log (1/\delta)},1\bigr)$. 
Therefore, by Dudley's theorem (see \cite{gine2016}, Theorem 2.3.8).
$\E \sup_{s,t}\bigl[|B_s-B_t|/J(|s-t|)\bigr]<\infty$, for 
$J(\delta)=\sqrt{\delta} \max\bigl(\sqrt{\log (1/\delta)},1\bigr)$. Because the process
$(s,t)\mapsto (B_s-B_t)/J(|s-t|)$ is centered Gaussian with uniformly bounded variance, 
we can apply Borell's inequality \eqref{eq:Borell} to see that there exist constants $D,E>0$ such that, for $y>0$,
$$\Pr\Bigl(\sup_{0<s,t<1}\frac{|B_s-B_t|}{J(|s-t|)}> E+y\Bigr)\le 2e^{-Dy^2}.$$
There exists a  constant $C>0$ such that $Cy^2\le D(y-E)^2$, for $y>2E$. Then, for $y>2E$,
$$\Pr\Bigl(\sup_{0<s,t<1}\frac{|B_s-B_t|}{J(|s-t|)}> y\Bigr)\le 2e^{-Cy^2}.$$
By making $C$ if necessary still smaller, we can ensure that the right side is bigger than 1 for $y\le 2E$,
and then the preceding inequality is valid for every $y>0$.

By the Dvoretsky-Kiefer-Wolfowitz-Massart inequality, we also have, for $y>0$,
$$\Pr\Bigl(\sup_{z\in\R}|\F_n(z)-F_0(z)|> y\Bigr)\le 2e^{-2ny^2}.$$
Combining these two inequalities, we see that, for every $y_1,y_2>0$,
$$\Pr \Bigl( \sup_{z\in\R} |B_{\F_n(z)} - B_{F_0(z)}| \geq y_1 J(y_2) \Bigr) \leq 2e^{-Cy_1^2}+2e^{-2ny_2^2}.$$
We  choose $y_1=\sqrt{2x/C}$ and $y_2=\sqrt{x/n}$ to reduce the right side to $4 e^{-2x}$,
and then have $y_1J(y_2)\ge K_1x^{3/4}\max(\sqrt{\log (n/x)},1)/n^{1/4}$, for some constant $K_1>0$. For $x<\log 2$,
we have that $2e^{-x}>1$ and hence the first inequality of the lemma is trivially satisfied. For $x\ge\log 2$,
we have $4 e^{-2x}\le 2e^{-x}$ and $\max(\sqrt{\log (n/x)},1)\ge K_2\sqrt{\log n}$, for some constant $K_2>0$ and $n\ge 2$.
The first inequality of the lemma follows.

For the second inequality of the lemma, note that 
$$\E_B[\sup_z|B_{\F_n(z)} - B_{F_0(z)}||Z_1,\dots,Z_n] \lesssim J\bigl(\|\F_n-F_0\|_\infty\bigr)=\sqrt{\|\F_n-F_0\|_\infty}\,\eta_n,$$ 
for $\eta_n^2= \max(\log (1/\|\F_n-F_0\|_\infty),1)$, by Dudley's bound, while 
$\sup_z\var_B( B_{\F_n(z)} - B_{F_0(z)}|Z_1,\dots,Z_n) \lesssim \|\F_n-F_0\|_\infty$. Therefore, by Borell's inequality \eqref{eq:Borell}, there exists $K>0$ such
that 
$$\Pr \Bigl( \Bigl. \sup_{z\in\R} \frac{| B_{\F_n(z)} - B_{F_0(z)} |}{\sqrt{\|\F_n-F_0\|_\infty}}
\ge K (\eta_n+y)\Bigr| Z_1,\ldots,Z_n \Bigr) \leq 2e^{-y^2}.$$
We conclude by noting that $\liminf_n \sqrt{2n \log \log n}\|\F_n - F_0\|_\infty = \pi/2>0$ a.s. by Mogulskii's law (p. 526 of \cite{ShorackWellner}), so that $\eta_n\lesssim \sqrt{\log n}$, a.s.
\end{proof}

\begin{lemma}\label{lem:exp_sup}
Consider the setting of Theorem \ref{thm:str_approx_BB} and let $\Delta_n = \|\sqrt{n}(F-\F_n)-B_n(\F_n)\|_\infty$. Then for any $t\in \R$ and every sequence $Z_1,Z_2,\dots$, as $n\to\infty$,
$$\E[e^{t\Delta_n}\given Z_1,\dots,Z_n] \to 1.$$
\end{lemma}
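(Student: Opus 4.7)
The plan is to derive the convergence from the non-asymptotic exponential tail bound on $\Delta_n$ supplied by Theorem~\ref{thm:str_approx_BB}. Since the constants $C_1, C_2, C_3$ there are universal and the bound is stated conditionally on $Z_1,\dots,Z_n$, the argument will produce estimates that hold uniformly in the conditioning and hence for every sequence $Z_1,Z_2,\dots$, not merely almost every sequence.

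First I will rewrite the tail bound in a convenient form. Setting $a_n = C_1(\log n + |\nu|)/\sqrt{n}$ (so $a_n\to 0$ since $|\nu|$ is fixed) and substituting $u = a_n + x/\sqrt{n}$ in Theorem~\ref{thm:str_approx_BB}, one obtains, for every $u \geq a_n$,
\begin{equation*}
P_Z(\Delta_n \geq u) \leq C_2\, e^{-C_3 \sqrt{n}(u - a_n)},
\end{equation*}
where $P_Z$ denotes $P(\cdot\given Z_1,\dots,Z_n)$. For $t>0$, I use the layer-cake identity $\E_Z[e^{t\Delta_n}] = 1 + t\int_0^\infty e^{tu} P_Z(\Delta_n > u)\,du$ (valid since $\Delta_n \geq 0$), split the integral at $u = a_n$, estimate $P_Z(\Delta_n>u)\le 1$ on $[0,a_n]$ and apply the displayed tail bound on $(a_n,\infty)$. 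Provided $n$ is large enough that $C_3\sqrt{n} > t$, the tail integral evaluates in closed form to $tC_2 e^{ta_n}/(C_3\sqrt{n}-t)$, yielding
\begin{equation*}
\E_Z[e^{t\Delta_n}] \;\le\; e^{ta_n}\Bigl(1 + \frac{tC_2}{C_3\sqrt{n}-t}\Bigr).
\end{equation*}
Since $a_n\to 0$ and the second factor tends to $1$, the right side tends to $1$; combined with the trivial lower bound $\E_Z[e^{t\Delta_n}] \geq 1$ (because $\Delta_n\ge 0$), this gives the claim for positive $t$.

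For $t<0$, $e^{t\Delta_n}\le 1$ gives the upper bound $\E_Z[e^{t\Delta_n}]\le 1$, while the elementary inequality $1-e^{-s}\le s$ for $s\ge 0$ yields $\E_Z[e^{t\Delta_n}] \geq 1 - |t|\,\E_Z[\Delta_n]$. An analogous split gives $\E_Z[\Delta_n] = \int_0^\infty P_Z(\Delta_n>u)\,du \leq a_n + C_2/(C_3\sqrt{n}) \to 0$, which closes the case $t<0$; the case $t=0$ is trivial. There is no substantive obstacle here beyond bookkeeping: the entire argument is a direct integration of the conditional tail estimate from Theorem~\ref{thm:str_approx_BB}, and the key subtlety is simply to note that its universal constants make the resulting convergence hold for every sequence $Z_1,Z_2,\dots$.
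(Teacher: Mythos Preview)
Your proof is correct and follows essentially the same approach as the paper: both use the layer-cake representation together with the conditional tail bound from Theorem~\ref{thm:str_approx_BB}, arriving at the identical upper bound $e^{ta_n}\bigl(1 + tC_2/(C_3\sqrt{n}-t)\bigr)$ for $t>0$. Your treatment of $t<0$ via $\E_Z[\Delta_n]\to 0$ is a minor variation on the paper's ``similar argument'' remark, but equally valid and arguably cleaner.
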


\begin{proof}
Suppose $t > 0$. For $\alpha_n =C_1(\log n+|\nu|)/\sqrt{n}$, with $C_1$ the universal constant from Theorem \ref{thm:str_approx_BB}, and using the change of variable $u=e^{t\alpha_n+tx/\sqrt{n}}$, 
\begin{align*}
&\E[e^{t\Delta_n}\given Z_1,\dots,Z_n]  \leq e^{t\alpha_n} + \int_{e^{t\alpha_n}}^\infty P(e^{t\Delta_n} \geq u | Z_1,\dots,Z_n )du\\
&\qquad = e^{t\alpha_n} +\frac{t e^{t\alpha_n}}{\sqrt{n}} \int_{0}^\infty P\left( \Delta_n \geq \frac{C_1(\log n+|\nu|) + x}{\sqrt{n}} \bigg| Z_1,\dots,Z_n \right) e^{tx/\sqrt{n}} dx.
\end{align*}
Using Theorem \ref{thm:str_approx_BB} and that $C_3 - t/\sqrt{n}>0$ for $n$ large enough,
\begin{align*}
\E[e^{t\Delta_n}|Z_1,\dots,Z_n] & \leq e^{t\alpha_n} +\frac{t e^{t\alpha_n}}{\sqrt{n}} \int_{0}^\infty C_2 e^{-(C_3-t/\sqrt{n})x}  dx\\
& = e^{t\alpha_n} \left( 1 + \frac{C_2 t}{C_3\sqrt{n} - t} \right) \to 1
\end{align*}
as $n\to \infty$. Since $\Delta_n \geq 0$ the lower bound $\E[e^{t\Delta_n}|Z_1,\dots,Z_n] \geq 1$ holds trivially, which completes the proof for $t>0$. The case $t<0$ follows by a similar argument.
\end{proof}

\subsection{Some weak convergence facts} \label{sec:laplace_conv}

For completeness, we include the proof that \eqref{EqMain} implies $\sqrt{n}(P_n - \P_n)g \weak N(0,P_0(g-P_0g)^2)$ for every $g\in \G$ and $P_0^\infty$-almost every sequence $Z_1,Z_2,\dots$.

\begin{lemma}
\label{LemmaLaplaceTransform}
If $Y_n$ are random variables with $\E e^{tY_n}\ra e^{t^2\s^2/2}$, for every $t$ in a subset of 
$\RR$ that contains both a strictly increasing sequence with limit 0 and a strictly decreasing sequence with limit 0,
then $Y_n\weak N(0,\s^2)$.
\end{lemma}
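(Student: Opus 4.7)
The plan is to first establish tightness of $(Y_n)$ from the MGF bounds at a pair of points $t_-<0<t_+$ in the given set $T$, extract a weakly convergent subsequence $Y_{n_k}\weak Y$, upgrade the sparse Laplace-transform convergence to convergence on the full open interval $(t_-,t_+)$ via a uniform-integrability argument, and finally identify $Y$ through analyticity.

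For tightness, pick $t_-<0<t_+$ in $T$ using the two monotone sequences; since $\E e^{t_\pm Y_n}\to e^{t_\pm^2\s^2/2}$, both are bounded in $n$. Markov's inequality gives $P(Y_n>M)\le e^{-t_+M}\E e^{t_+Y_n}$ and $P(Y_n<-M)\le e^{t_-M}\E e^{t_-Y_n}$, both vanishing as $M\to\infty$ uniformly in $n$, so $(Y_n)$ is tight and along any subsequence we can extract a weak limit $Y_{n_k}\weak Y$. Next, for any $t\in(t_-,t_+)$, convexity of $s\mapsto e^{sY_n}$ gives $\E e^{tY_n}\le \E e^{t_-Y_n}+\E e^{t_+Y_n}$, so the MGFs remain uniformly bounded throughout $(t_-,t_+)$. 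Choosing $r>1$ with $rt\in(t_-,t_+)$ makes $\{e^{tY_{n_k}}\}$ uniformly integrable, and combined with $Y_{n_k}\weak Y$ and the continuous mapping theorem this yields $\E e^{tY_{n_k}}\to \E e^{tY}$ for every $t\in(t_-,t_+)$. In particular, the MGF of $Y$ is finite on $(t_-,t_+)$, so $z\mapsto \E e^{zY}$ extends to an analytic function on the strip $\{z\in\mathbb{C}:t_-<\operatorname{Re}z<t_+\}$.

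On $T\cap(t_-,t_+)$ this analytic function agrees with the entire function $t\mapsto e^{t^2\s^2/2}$, and by the hypothesis on $T$ the point $0$ is an accumulation point of $T\cap(t_-,t_+)$. The identity theorem for analytic functions forces agreement throughout $(t_-,t_+)$; since coincidence of MGFs on an open interval about $0$ uniquely determines a distribution, $Y\sim N(0,\s^2)$. Tightness together with uniqueness of every subsequential weak limit then yields $Y_n\weak N(0,\s^2)$.

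The main technical step is promoting the sparse pointwise MGF convergence on $T$ into convergence on a full open interval $(t_-,t_+)$ via the convexity-plus-uniform-integrability argument; after that, the identity theorem handles the identification essentially for free. The only subtle point is that $T$ need not contain an interval, so one cannot directly invoke a textbook MGF continuity theorem; passing through tightness and a subsequence argument is what makes the analyticity available.
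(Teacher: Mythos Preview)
Your proof is correct and follows essentially the same approach as the paper's: tightness via Markov's inequality at two points of $T$ on either side of zero, passage to a subsequential weak limit, uniform integrability to transfer MGF convergence to the limit, and analytic continuation to identify $Y$ as Gaussian. Your explicit convexity bound $\E e^{tY_n}\le \E e^{t_-Y_n}+\E e^{t_+Y_n}$ makes the uniform-integrability step slightly cleaner than the paper's phrasing, but the structure is the same.
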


\begin{proof}
Let $\cal T$ be the set of points and let $a<0$ and $b>0$ be contained in $\cal T$.  Because $\E e^{tY_n}$
is bounded in $n$, for both $t=a$ and $t=b$, the sequence $Y_n$ is tight, by Markov's
inequality. For every $t\in \cal T$ strictly between $a$ and $b$, some power larger than 1 of the
variable $e^{tY_n}$ is bounded in $L_1$, and hence the sequence $e^{tY_n}$ is uniformly
integrable. Consequently, if $Y$ is a weak limit point of $Y_n$, then $\E e^{tY_n}$ tends to
$\E e^{tY}$ along the same subsequence for every $t\in (a,b)\cap \cal T$. In view of the assumption of
the lemma, it follows that $\E e^{tY}=e^{t^2\s^2/2}$. The set $t\in (a,b)\cap \cal T$ is infinite by
assumption. Finiteness of $\E e^{tY}$ on this set implies that the function $z\mapsto \E e^{z Y}$
is analytic in an open strip containing the real axis.  By analytic continuation it is equal to
$e^{z^2\s^2/2}$, whence $\E e^{is Y}=e^{-s^2\s^2/2}$, for every $s\in\RR$.
\end{proof}

\begin{corollary}
\label{CorLaplaceTransform}
If $(Y_n,Z_n)$ are random elements with $\E (e^{tY_n}\given Z_n)\ra e^{t^2\s^2/2}$, in
probability, for every $t$ in a set that contains both a strictly increasing sequence with limit 0 and a
strictly decreasing sequence with limit 0, then $Y_n\given Z_n\weak N(0,\s^2)$, in
probability.  If the convergence in the assumption is in the almost sure sense, then the
conclusion is also true in the almost sure sense.
\end{corollary}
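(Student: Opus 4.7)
The plan is to reduce both assertions to Lemma~\ref{LemmaLaplaceTransform}, applied pointwise in the sample space. First I would fix two countable subsets of the set ${\cal T}$ in the hypothesis: a sequence $(s_k)$ strictly increasing to $0$ and a sequence $(t_k)$ strictly decreasing to $0$. Writing ${\cal T}_0=\{s_k\}\cup\{t_k\}$, this countable collection already satisfies the hypothesis of Lemma~\ref{LemmaLaplaceTransform}, so it is the only subset I will need to control.

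For the almost sure version, for each fixed $t\in{\cal T}_0$ there is a null set off which $\E(e^{tY_n}\given Z_n)\ra e^{t^2\s^2/2}$. The union of these countably many null sets is still null, and off this union Lemma~\ref{LemmaLaplaceTransform} applies directly to the (now deterministic) sequence of conditional laws $\L(Y_n\given Z_n)$, yielding $\L(Y_n\given Z_n)\weak N(0,\s^2)$ pointwise, which is exactly the stated almost sure conclusion.

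For the in-probability case, I would invoke the standard subsequence principle: letting $d$ be any metric metrizing weak convergence on $\RR$ (for instance the bounded Lipschitz metric), it suffices to show that every subsequence of $d\bigl(\L(Y_n\given Z_n),N(0,\s^2)\bigr)$ admits a further subsequence tending to $0$ almost surely. Given any subsequence, I would extract nested further subsequences, one $t\in{\cal T}_0$ at a time, using that convergence in probability to a constant passes to an almost surely convergent subsequence, and then pass to a diagonal. Along this diagonal subsequence $(n_k)$, almost surely $\E(e^{tY_{n_k}}\given Z_{n_k})\ra e^{t^2\s^2/2}$ simultaneously for every $t\in{\cal T}_0$; the almost sure case already established then yields $\L(Y_{n_k}\given Z_{n_k})\weak N(0,\s^2)$ almost surely, which is what is needed.

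The only mildly delicate point is organising the diagonal extraction cleanly in the in-probability case; once a single subsequence on which the exponential-moment convergence holds almost surely for every $t\in{\cal T}_0$ has been secured, the remainder reduces to a direct appeal to Lemma~\ref{LemmaLaplaceTransform}.
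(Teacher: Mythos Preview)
Your proposal is correct and follows essentially the same approach as the paper: reduce to a countable set of $t$'s, use the subsequence principle with a diagonal extraction to obtain almost sure convergence of the conditional Laplace transforms along a subsequence, and then invoke Lemma~\ref{LemmaLaplaceTransform} pointwise. Your version is slightly more explicit in treating the almost sure case first and then reducing the in-probability case to it, but the underlying argument is the same.
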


\begin{proof}
  For the conclusion in probability it suffices to show that every subsequence of $\{n\}$ has a
  further subsequence with $d\bigl(\L(Y_n\given Z_n), N(0,\s^2)\bigr)\ra0$, almost surely, where $d$ is a metric defining weak convergence.  From the
  assumption we know that every subsequence has a further subsequence with
  $\E (e^{tY_n}\given Z_n)\ra e^{t^2\s^2/2}$, almost surely. For a countable set of $t$, we can
  construct a single subsequence with this property for every $t$, by a diagonalization scheme. The
  preceding lemma gives that $d\bigl(\L(Y_n\given Z_n), N(0,\s^2)\bigr)\ra0$, almost surely, along
  this subsequence.
\end{proof}

\noindent \textbf{Acknowledgements:} We would like to thank two referees for their helpful comments, in particular one referee for pointing out a missing step in the proof.

\bibliographystyle{acm}
\bibliography{BvM_DP_refs}{}

\end{document}